\newtheorem{teo}{Theorem}[section]
\newtheorem{lema}{Lemma}[section]
\newtheorem{prop}{Proposition}[section]
\newtheorem{defi}{Definition}[section]
\newtheorem{obs}{Remark}[section]
\newtheorem{coro}{Corollary}[section]
\newcommand{\be}{\begin{equation}}
\newcommand{\ee}{\end{equation}}
\newcommand{\R}{{\mathbb R}}
\newcommand{\di} {\displaystyle}
\begin{document}
\vglue-1cm \hskip1cm
\title[Periodic Waves for the Cubic-Quintic NLS]{Periodic Waves for the Cubic-Quintic NonLinear Schrodinger Equation: Existence and Orbital Stability}

\begin{center}
	
	\subjclass[2000]{76B25, 35Q51, 35Q53.}
	
	\keywords{Standing waves, dnoidal periodic solutions, monotonicity of the period map, orbital stability.}

	\maketitle
	
	{\bf Giovana Alves }
	
	{Centro de Ci\^encias Exatas, Naturais e Tecnol\'ogicas\\
		Universidade Estadual da Regi\~ao Tocantina do Maranh\~ao\\
		R. Godofredo Viana, 1300, CEP 65900-000, Imperatriz, MA, Brazil.}\\
	{ giovana.alves@uemasul.edu.br}
	
	\vspace{3mm}

	{\bf F\'abio Natali}
	
	{Departamento de Matem\'atica\\
		Universidade Estadual de Maring\'a\\
		Avenida Colombo, 5790, CEP 87020-900, Maring\'a, PR, Brazil.}\\
	{fmanatali@uem.br}

\end{center}	
	
	\begin{abstract}
		In this paper, we prove existence and orbital stability results of periodic standing waves for the cubic-quintic nonlinear Schr\"odinger equation. We use the implicit function theorem to construct a smooth curve of explicit periodic waves with \textit{dnoidal} profile and such construction can be used to prove that the associated period map is strictly increasing in terms of the energy levels. The monotonicity is also useful to obtain the behaviour of the non-positive spectrum for the associated linearized operator around the wave. Concerning the stability, we prove that the dnoidal waves are orbitally stable in the energy space restricted to the even functions.
	
	\end{abstract}
	
	\section{Introduction}
	\label{intro}
	In this work, we establish the existence of explicit solutions and orbital stability results
	of positive and periodic standing waves for the nonlinear Schr\"{o}dinger equation
	\begin{equation}
		iu_t+u_{xx}+|u|^2u+|u|^4u=0, \label{eq21}
	\end{equation}
	where $u:\mathbb{R} \times \mathbb{R}_+ \rightarrow \mathbb{C}$ is an $L$-periodic function in the spatial variable. Equation $(\ref{eq21})$ is a particular case of a more general equation
	\begin{equation}
		iu_t+u_{xx}+a|u|^2u+b|u|^4u=0, \label{eq2}
	\end{equation}
	where $a,b$ are real constants satisfying $a^2+b^2\neq0$. 
	In a general context, the nonlinear Schr\"odinger equation has many applications in physics and engineering as in nonlinear optics, quantum mechanics, and nonlinear waves. More specifically, the cubic-quintic NLS equation (CQNLS henceforth) $(\ref{eq2})$ appears in the interaction of boson gas and nonlinear optics (see \cite{BGMP}). In a mathematical point of view, the cubic-quintic nonlinearity in (\ref{eq2}) is an interesting model since the double power gives us the absence of scaling invariance. This fact establishes some difficulties for obtaining the critical exponent in $L^2$, a very important feature concerning the existence of global solutions in the energy space $H^1$. It is well know that either $a=1$ and $b=0$ or $a=0$ and $b=1$, the NLS equation $(\ref{eq2})$ enjoys this important property and, in the latter case, it is called \textit{mass-critical} NLS equation.

	Standing wave solutions for the equation (\ref{eq2}) are given by the formula
	\begin{equation}\label{PW2}
		u(x,t)=e^{i\omega t}\phi(x),  
	\end{equation}
	where $\omega\in \R$ and $\phi$ is a  $L$-periodic smooth function. By substituting this kind of solution into (\ref{eq2}), we obtain the second order nonlinear ordinary differential equation
	\begin{equation}
		-\phi''+\omega\phi -a\phi^3-b\phi^5=0. \label{edo}
	\end{equation}
	
	Formally, equation (\ref{eq2}) conserves the energy
	\begin{equation}
		E(u)=\frac{1}{2}\int_{0}^{L}|u_x|^2-\frac{a|u|^4}{2}-\frac{b|u|^6}{3}dx \label{E}
	\end{equation}
	and the mass
	\begin{equation}
		F(u)=\frac{1}{2}\int_{0}^{L}|u|^2dx. \label{F}
	\end{equation}
	
	In addition, equation (\ref{eq2}) can be seen as a (real) Hamiltonian system which is a good feature to study the orbital stability of standing waves. In fact, writing $u= P + iQ$ and separating real and imaginary parts, we see that (\ref{eq2}) can be reduced to a single evolution system of equations as
	\begin{equation}
		\frac{d}{dt}U(t)=JE'(U) \label{ham}
	\end{equation}
	where $E'$ represents the Fr\'echet derivative of $E$ with respect to $U=\left( \begin{array}{c}
		P \\
		Q \\
	\end{array}\right)=(P,Q)$, and $J$ is the skew-symmetric matrix
	$J= \left( \begin{array}{cc}
		0& 1 \\
		-1 & 0  \\
	\end{array}\right).$ Using (\ref{edo}), we see that $E'(\phi,0)+\omega F'(\phi,0)=0$, that is, $\Phi=(\phi, 0)$ is a critical point of the Lyapunov functional $G=E + \omega F$. As far as we know, the orbital stability of the wave $\Phi$ can be determined by minimizing the functional $E$ under a fixed constrained momentum $F$. Thus, since $\Phi$ is a critical point of the smooth functional $G$, it is intuitive to think that the "stability" can be determined by proving that the second derivative of $G$ at the point $\Phi$, and denoted by $G''(\Phi)$, is strictly positive in some sense. To be more specific, let us consider the linear operator
	\begin{equation}\mathcal{L}= \left( \begin{array}{cc}
			-\partial_x^2+\omega-3a\phi^2-5b\phi^4& 0 \\
			0 & -\partial_x^2+\omega-a\phi^2-b\phi^4  \\
		\end{array}\right).\label{L-35i} 
	\end{equation}
	\indent It is possible to show $\mathcal{L}=G''(\Phi)$ and we obtain by $(\ref{edo})$ that $\mathcal{L}(\phi',0)=(0,0)$ and $\mathcal{L}(0,\phi)=(0,0)$, that is, $\mathcal{L}$ has at least two null directions. Moreover, denoting $(\cdot,\cdot)_{\mathbb{L}_{per}^2}$ the inner product in $\mathbb{L}_{per}^2=L_{per}^2\times L_{per}^2$ (see more information for notations in the next section) and considering $a,b>0$, we obtain $(\mathcal{L}\Phi,\Phi)_{\mathbb{L}_{per}^{2}}=-2a\int_0^L\phi^4-4b\int_{0}^L\phi^6dx<0$ and $\mathcal{L}$ has negative directions at the wave $\Phi$. The question which naturally arises is: \textit{how to obtain the positivity of $\mathcal{L}$ by considering this inconvenient scenario with null and negative directions?} To overcome this difficult and at least in our context, the pioneer work in \cite{GSS} established that in a non-favourable setting as presented above, it is possible to obtain the positiveness of $\mathcal{L}$ by taking, for example, only one negative direction which needs to be "compensated" with only one positive direction of the hessian matrix associated to the function $d(\omega)=E(\Phi)+\omega F(\Phi)$. In addition, the quantity of null directions need to be considered according to the quantity of symmetries present in the evolution equation. In our context and since we are considering standing waves of the form $u(x,t)=e^{i\omega t}\phi(x)$, we need to consider only the rotation symmetry for the equation $(\ref{eq2})$, so that the quantity of null directions needs to be one (see Remark $\ref{obs12}$ ahead). However, besides the rotation symmetry, it is well known that $(\ref{eq2}) $ is invariant under translations acting in the whole energy space $\mathbb{H}_{per}^1$. Thus, it is convenient to consider a suitable space where the translation invariance fails. Considered here will be $\mathbb{H}_{per}^1$ restricted to the even periodic functions and denoted by $\mathbb{H}_{per,e}^1$.\\
	\indent Summarizing our considerations for obtaining the orbital stability of standing waves in our context, we need to obtain the following \textit{sufficient set of conditions}: 
	\begin{itemize}
		\item[(i)] there exists a smooth curve of solution for \eqref{edo}, $\omega \in I\subset\mathbb{R}  \mapsto\phi_{\omega} \in H_{per,e}^2([0,L]),$ where each $\phi:=\phi_{\omega}$ has period $L$.
		\item[(ii)] The linearized operator  $\mathcal{L}$ defined in $(\ref{L-35i})$ and restricted to the space of even periodic functions has only one negative eigenvalue which is simple and zero is a simple eigenvalue associated to the eigenfunction $(0,\phi)$.
		\item[(iii)] The hessian matrix of $d: I \rightarrow \mathbb{R}$ and defined by $d(\omega)=E(\Phi)+\omega F(\Phi)$ is positive definite, that is, \begin{equation}\label{d2}
			d''(\omega)=\frac{1}{2}\frac{d}{d\omega}\int_0^{L}\phi^2dx>0.
		\end{equation}
		
	\end{itemize}
	In addition, according to the conditions above and since operator $J$ in $(\ref{ham})$ is invertible with bounded inverse, we can conclude from \cite{GSS} that the wave $\phi$ is orbitally unstable if $d''(\omega)<0$.\\
	\indent We describe how to obtain (i)-(iii) in our case. First, we consider equation $(\ref{eq21})$ and the reason for that is to study the orbital stability of positive and symmetric periodic waves which turns around the equilibrium point and they are bounded by the homoclinic solution. As we will see below, cases $a=0$, $b=1$ and $a=1$, $b=0$ have the same kind of periodic solutions and we describe with details the results concerning these cases in the next paragraphs.\\
	\indent Item (i) is obtained by using the quadrature method and the implicit function theorem (see \cite{JaimeFabio1} and \cite{cesar}). We construct, for a fixed period $L>0$, a smooth curve of periodic waves $\phi$ depending on $\omega$ with dnoidal profile as
	\begin{equation}
		\phi(x)= \frac{\sqrt{\alpha_3}dn\left(\frac{2}{\sqrt{3}g}x,k\right)}{\sqrt{1+\beta^2sn^2\left(\frac{2}{\sqrt{3}g} x,k\right)}} \label{sol21}
	\end{equation}
	where parameters $g$, $\beta$ and the modulus $k$ are 
	\be\label{kg1} g=\frac{2}{\sqrt{\alpha_3(\alpha_2-\alpha_1)}}, \quad \beta^2=-\frac{\alpha_3}{\alpha_1}k^2>0,\quad k^2=\frac{-\alpha_1(\alpha_3-\alpha_2)}{\alpha_3(\alpha_2-\alpha_1)}.\ee
	Parameters $\alpha_i$, $i=1,2,3$, are the non-zero roots of the polynomial $P(s)=-s^4-\frac{3s^3}{2}+3\omega s^2+3Bs$ where $B$ is the second constant of integration which appears in the quadrature form associated to the equation $(\ref{edo})$ given by
	\be
	(\phi')^2=-\frac{\phi^6}{3}-\frac{\phi^4}{2}+\omega\phi^2+B, \label{quadratura-351}
	\ee
	\indent Another important fact obtained by the construction of smooth periodic waves is the spectral information required in item (ii). In  fact, the construction of smooth periodic waves is crucial in our analysis since one obtains that the period map $\Psi:\Omega\rightarrow\R$ defined in a convenient open subset $\Omega\subset\mathbb{R}^2$ and given by
	\be\label{period-map1}
	\Psi(\alpha,\omega)=\frac{\sqrt{8} {{3}^{\frac{1}{4}}}  }{\sqrt{{\alpha}} {{\left( 16\omega-4\alpha^2-4\alpha+3\right) }^{\frac{1}{4}}}}K(k(\omega,\alpha)),
	\ee
	is smooth in terms of the pair $(\alpha,\omega)$. Here, $K=K(k)$ indicates the complete elliptic integral of first kind (see \cite{Byrd}). We have that $\alpha$ makes the role of $\alpha_3$ and to be more precisely, $\alpha$ is the square of the initial condition $\phi(0)$ and it has an intrinsic relation with the constant $B$ according with the equality $(\ref{quadratura-351})$. For a fixed $\omega_0$ and $B$ varying in the interval $(B_{\omega_0},0)$ where $B_{\omega_0}:=\frac{1-(4\omega_0+1)^{\frac{3}{2}}+6\omega_0}{12}<0$, we obtain that $\Psi(\cdot,\omega_0)$ is strictly increasing in terms of the constant $B\in (B_{\omega_0},0)$ and this fact is crucial to obtain that $\mathcal{L}$ defined in $(\ref{L-35i})$ and restricted to the space of even periodic functions has only one negative eigenvalue which is simple and zero is a simple eigenvalue associated to the eigenfunction $(0,\phi)$. To do so, we use the approaches in \cite{natali1} and \cite{neves} which give a precise information of the non-positive spectrum concerning the linear operators in the main diagonal of $\mathcal{L}$ in $(\ref{L-35i})$.\\
	\indent Finally, to obtain $(\ref{d2})$ we need to use some computations concerning the smooth curve $\omega \in I\mapsto \phi$ of periodic waves solutions with fixed period. In our analysis, it is crucial to know the derivative of the constant $B$ in $(\ref{quadratura-351})$ in terms of $\omega$. An important characteristic in our work is that we do not use numerical plots as in \cite{JaimeFabio1} and \cite{hss} to justify, for a fixed period $L>0$, that $d''(\omega)>0$ for all $\omega\in\left(\frac{1}{8}\left(\frac{\sqrt{L^2+16\pi}}{L}+\frac{8\pi}{L^2}-1\right),+\infty\right)$. Summarizing all facts described above, we are enabled to enunciate our main theorem:
	\begin{teo}[Orbital stability of the dnoidal waves for the CQNLS equation]\label{teo-nsl}
		Let $L>0$ be fixed. If $\omega\in\left(\frac{1}{8}\left(\frac{\sqrt{L^2+16\pi}}{L}+\frac{8\pi}{L^2}-1\right),+\infty\right)$, where  $\phi$ is the dnoidal  solution given in $(\ref{sol21})$, then the standing wave $u(x,t)=e^{i\omega t}\phi(x)$
		is orbitally stable in  $\mathbb{H}^1_{per,e}$.
	\end{teo}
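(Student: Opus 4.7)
The plan is to verify the three sufficient conditions (i)--(iii) listed in the Introduction and then invoke the abstract theorem of \cite{GSS} for the Hamiltonian system (\ref{ham}) to obtain orbital stability of $\Phi=(\phi,0)$ in $\mathbb{H}^1_{per,e}$. Since rotation is the only continuous symmetry acting on the even subspace, the GSS framework reduces to a smooth-branch construction, a single-negative-eigenvalue spectral count for the linearization, and the scalar convexity inequality (\ref{d2}).

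For condition (i) I would combine the quadrature identity (\ref{quadratura-351}) with the dnoidal ansatz (\ref{sol21}) to parametrize positive even periodic solutions of (\ref{edo}) by the pair $(\alpha,\omega)\in\Omega$ through the period map $\Psi$ in (\ref{period-map1}). Smoothness of $\Psi$ on $\Omega$ and non-vanishing of $\partial_\alpha\Psi$ (equivalent to the strict monotonicity of $\Psi$ in the integration constant $B$ announced in the Introduction) allow the implicit function theorem to solve $\Psi(\alpha,\omega)=L$ for $\alpha=\alpha(\omega)$, producing the smooth branch $\omega\mapsto\phi_\omega\in H^2_{per,e}([0,L])$.

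For condition (ii) I would analyze the diagonal scalar operators
\[
\mathcal{L}_+=-\partial_x^2+\omega-3\phi^2-5\phi^4,\qquad \mathcal{L}_-=-\partial_x^2+\omega-\phi^2-\phi^4,
\]
whose direct sum is $\mathcal{L}$. Since $\mathcal{L}_-\phi=0$ with $\phi>0$, Sturm--Liouville theory identifies $0$ as the simple first eigenvalue of $\mathcal{L}_-$, with no negative spectrum. Differentiating (\ref{edo}) in $x$ gives $\mathcal{L}_+\phi'=0$, and the criteria of \cite{neves} and \cite{natali1} translate the sign of $\partial_B\Psi$ along the quadrature curve into the position of $0$ in the spectrum of $\mathcal{L}_+$: strict monotonicity from Step 1 places $0$ as the second eigenvalue on $L^2_{per}$, so $\mathcal{L}_+$ has exactly one simple negative eigenvalue. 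Restricting to the even subspace removes the odd eigenfunction $\phi'$ from the kernel, yielding trivial kernel for $\mathcal{L}_+$ on even functions and a single simple zero eigenvalue of $\mathcal{L}$ on $\mathbb{L}^2_{per,e}$ generated by $(0,\phi)$.

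The main obstacle is condition (iii). Writing $d''(\omega)=\tfrac12\tfrac{d}{d\omega}\int_0^L\phi^2\,dx$, I would express the mass integral in closed form via complete elliptic integrals $K(k)$ and $E(k)$ using (\ref{sol21}) and the parameters (\ref{kg1}). Since the period is pinned to $L$, the integration constant $B$ depends on $\omega$, and $\tfrac{dB}{d\omega}$ must be extracted by implicitly differentiating $\Psi(\alpha(\omega),\omega)=L$; this is precisely the computation that must be carried out analytically rather than numerically as in \cite{JaimeFabio1,hss}. The resulting expression for $d''(\omega)$ is an algebraic combination of $K$, $E$, and the roots $\alpha_i$, and the threshold $\frac{1}{8}\bigl(\frac{\sqrt{L^2+16\pi}}{L}+\frac{8\pi}{L^2}-1\bigr)$ is the lower endpoint of the $\omega$-interval on which the dnoidal solution of period $L$ exists and the elliptic inequality $d''(\omega)>0$ holds. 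With (i)--(iii) verified and $J$ invertible with bounded inverse, \cite{GSS} yields the claimed orbital stability.
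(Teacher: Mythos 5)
Your overall architecture coincides with the paper's: conditions (i) and (ii) are handled exactly as in Sections 3 and 4 (implicit function theorem for the period map with $\partial_\alpha\Psi>0$, Sturm--Liouville for $\mathcal{L}_2$, and the criterion of \cite{natali1,neves} combined with $\frac{dT}{dB}>0$ via Lemma \ref{teo13} and Proposition \ref{prop2} for $\mathcal{L}_1$), and your sketch of these steps is essentially correct, modulo the fact that the ``equivalence'' between monotonicity in $\alpha$ and in $B$ itself requires the short argument of Subsection \ref{sec3-2} showing $\frac{d\alpha}{dB}>0$.

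The genuine gap is condition (iii), which is the analytic heart of the theorem and is only asserted in your proposal. Saying that $\int_0^L\phi^2\,dx$ can be written ``in closed form via $K$ and $E$'' and that ``the elliptic inequality $d''(\omega)>0$ holds'' is not a proof: a direct closed-form differentiation of the mass is precisely the route that forced \cite{JaimeFabio1}, \cite{hss} and \cite{cesar} into numerical plots, because the resulting expression mixes $K(k)$, $E(k)$, the roots $\alpha_i(\omega)$ and $\frac{dk}{d\omega}$ with no evident sign. The paper avoids this by never differentiating the mass formula directly: it derives the identity $(\ref{3edoomega1})$ expressing $\left(2\omega+\frac{3}{8}\right)\frac{d}{d\omega}\int_0^L\phi^2\,dx$ through $\frac{dB}{d\omega}$ and $\frac{d}{d\omega}\int_0^L\phi^{-2}\,dx$, proves $\frac{dB}{d\omega}<0$ (Proposition \ref{coro3}, which needs $\omega-\Lambda^2-\Lambda>0$ and $\Lambda'>0$ from Corollary \ref{coro1}), computes $\int_0^L\phi^{-2}\,dx$ explicitly in $(\ref{expdn1})$, controls its $\omega$-derivative using $\alpha_1'>0$, $\alpha_2'<0$ and $\frac{dk}{d\omega}<0$ (Remark \ref{rem1}, Corollary \ref{coro1}), and finally groups the potentially negative contributions into $R_1,R_2,R_3$ of $(\ref{R1})$--$(\ref{R3})$, whose sum $(\ref{sumR})$ is positive because $\alpha_1<-\frac{3}{2}$. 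None of these steps appears in your proposal, so the positivity $d''(\omega)>0$ is left unproved. A secondary inaccuracy: the threshold $\frac{1}{8}\bigl(\frac{\sqrt{L^2+16\pi}}{L}+\frac{8\pi}{L^2}-1\bigr)$ is not the onset of the inequality $d''(\omega)>0$; it is solely the existence threshold for the $L$-periodic dnoidal branch coming from the infimum of the period map, and the paper proves $d''(\omega)>0$ on the whole existence interval.
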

	
	\textbf{Literature Overview.} For the case $a=1$ and $b=0$, equation in $(\ref{eq2})$ is the well known cubic nonlinear Schr\"odinger equation and it admits positive and sign changing periodic waves. For positive waves and using the approach in \cite{bona} and \cite{Weinstein}, the author in \cite{angulo} established the orbital stability of periodic standing waves solutions with dnoidal profile very similar to $(\ref{sol21})$ with $\beta=0$ (see also \cite{gallay1} and \cite{lecoz}). For periodic sign changing waves with cnoidal profile, we have the following contributions: existence of a smooth curve of periodic waves depending on $\omega$ with cnoidal profile of the form $\phi(x)=\alpha cn(bx,k)$ has also been reported in \cite{angulo}. By using the techniques introduced in \cite{grillakis2} and \cite{GSS}, the cnoidal waves were shown to be stable  in \cite{gallay1} and \cite{gallay2} with respect to anti-periodic perturbations. Spectral stability with respect to bounded or localized perturbations were also reported in \cite{gallay1}. For $\omega$ in some interval $ (0,\omega_1)$, the authors in \cite{lecoz} have established spectral stability results for the cnoidal waves with respect to perturbations with the same period  $L$ and orbital stability results in the space  constituted by anti-periodic functions with period $L/2$. The orbital stability of periodic cnoidal waves was determined in \cite{LNMP} in the same interval $(0,\omega_1)$ as above. However, the authors have restricted the analysis over the Sobolev space $\mathbb{H}_{per}^1$ constituted by zero mean periodic functions.\\
	\indent For the case $a=0$ and $b=1$, we have the work \cite{JaimeFabio1} where, for a fixed $L>0$, the authors showed the existence of a unique $\omega_2>\frac{\pi^2}{L^2}$ such that the periodic wave with dnoidal profile very similar to $(\ref{sol21})$ is orbitally stable for all $\omega\in\left(\frac{\pi^2}{L^2},\omega_2\right)$ and orbitally unstable for all $\omega\in (\omega_2,+\infty)$. A different value $\omega_3$ compared with $\omega_2$ in \cite{JaimeFabio1} has been reported in \cite{hss} for the same equation to prove the spectral stability/instability of periodic waves with a similar profile as in $(\ref{sol21})$. The discrepancy between $\omega_2$ and $\omega_3$ can have been occasioned by a numerical error in the computational approach. However, this fact is irrelevant since in both cases the authors obtained "stability" for small values of $\omega$ and "instability" for large ones.\\
	\indent In the case $a=b=1$, the author in \cite{cesar} established the orbital stability of periodic standing waves of the form $(\ref{PW2})$ in the whole energy space $\mathbb{H}_{per}^1$ by using the classical approaches \cite{grillakis2}-\cite{GSS}. The spectral analysis was borrowed of \cite{natali2} and to calculate $d''(\omega)$ the author employed some numerical computations with Maple program to plot the graphic of $d''(\omega)$ in terms of the modulus $k\in (0,1)$ and $L>0$. Our method is different since we use the monotonicity of the period map in $(\ref{period-map1})$ in terms of the initial data $\alpha$ to obtain the spectral analysis and the evaluation for $d''(\omega)$ seems more clear since we use an analytical argument.
	\begin{obs}\label{obs12} It is important to highlight that the abstract theories in \cite{grillakis2}-\cite{GSS} can not be directly applied to conclude the orbital stability in the whole energy space $\mathbb{H}_{per}^1$ when the simple standing wave of the form $(\ref{PW2})$ is considered for the equation $(\ref{eq2})$. In fact, the abstract approach \cite{grillakis2} can be applied to deduce the orbital stability in the whole energy space $\mathbb{H}_{per}^1$ when it is considered a periodic wave containing both translation and rotation symmetries while \cite{GSS} only works by considering one symmetry only. Following the arguments in \cite{Weinstein} (see also \cite{angulo} and \cite{natali1}), it is possible to consider standing waves of the form $(\ref{PW2})$ to prove the orbital stability in $\mathbb{H}_{per}^1$ by considering the two basic symmetries. However and according to our best knowledge, global solutions in time are needed to this end. As far as we know, local solutions for the initial value problem associated to $(\ref{eq21})$ can be determined (see \cite[Chapter 5]{Iorio}) while global solutions are not expected because of the presence of the critical power nonlinearity $|u|^4u$ which gives a blow up phenomena in finite time in $\mathbb{H}_{per}^1$. Thus, the correct space to use the approach \cite{GSS} by considering only local solutions is $\mathbb{H}_{per,e}^1$, not the whole space $\mathbb{H}_{per}^1$.\end{obs}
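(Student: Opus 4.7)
The remark is explanatory, so the proof amounts to verifying its four component claims in turn: (a) why the GSS-type machinery is tied to the dimension of the symmetry group; (b) why the two-symmetry version requires global well-posedness; (c) why such global existence is not available for (\ref{eq21}) in the full space $\mathbb{H}_{per}^1$; and (d) why restricting to $\mathbb{H}_{per,e}^1$ collapses the problem to a setting where the one-symmetry theory of \cite{GSS} applies with only local solutions.

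The plan for (a) is to recall that (\ref{eq2}) is invariant under the one-parameter groups $T_1(\theta)u=e^{i\theta}u$ (rotation) and $T_2(y)u(\cdot)=u(\cdot-y)$ (translation), both acting isometrically on $\mathbb{H}_{per}^1$. For a wave of the form (\ref{PW2}), the group orbit under $T_1\times T_2$ has dimension two, so the kernel of $\mathcal{L}=G''(\Phi)$ contains both $(0,\phi)$ and $(\phi',0)$. The GSS count that makes $d''(\omega)>0$ sufficient for stability presupposes exactly one neutral direction; with two symmetries one instead needs the full Hessian of a vector-valued $d(\omega,\cdot)$ to have an appropriate inertia, as in the framework of \cite{grillakis2}. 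Thus the simple condition (\ref{d2}) cannot by itself close the argument in $\mathbb{H}_{per}^1$.

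For (b) and (c), I would explain that to upgrade a critical point to an orbitally stable orbit, one constructs modulation parameters $(\theta(t),y(t))$ that keep the solution tangent to the two-dimensional group orbit. This tubular argument must be run for all $t\in\mathbb{R}$, which forces global well-posedness in $\mathbb{H}_{per}^1$. For the cubic NLS the conserved $L^2$-norm controls the $H^1$-norm via Gagliardo--Nirenberg and gives global existence, but the quintic term $|u|^4u$ is $L^2$-critical in one dimension; the sharp Gagliardo--Nirenberg constant produces a mass threshold beyond which blow-up solutions exist on the line, and no mechanism on the torus is known to prevent analogous concentration. One can cite \cite[Chapter 5]{Iorio} for local well-posedness in $\mathbb{H}_{per}^1$ and note that no global result is presently available, so the modulation argument with two symmetries cannot be rigorously completed.

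For (d), the key observation is that $\mathbb{H}_{per,e}^1$ is preserved by the flow (since $x\mapsto -x$ is itself a symmetry of (\ref{eq21}) and the local solution is unique), while the translation $T_2(y)$ maps out of $\mathbb{H}_{per,e}^1$ for generic $y$, so only the rotation $T_1$ acts as a continuous symmetry on the reduced phase space. The relevant orbit has dimension one, the kernel of $\mathcal{L}$ restricted to even functions reduces to $(0,\phi)$ alone (a fact that is also what item (ii) in the introduction will establish), and the GSS theorem applies in its one-symmetry form, which only requires the modulation $\theta(t)$ to be defined on the local time-interval of existence. The main obstacle in writing this out fully is making the non-global-existence claim precise: strictly speaking one would need either a blow-up construction adapted to the torus (Ogawa--Tsutsumi style) or an invocation of the fact that no global theory has ever been produced; since the remark only asserts that global solutions are \emph{not expected}, pointing to the $L^2$-critical scaling and the line-case blow-up via the sharp Gagliardo--Nirenberg constant is sufficient justification.
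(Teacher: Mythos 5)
Your proposal is correct and follows essentially the same chain of reasoning the paper uses (the remark itself together with the discussion opening Section 5): two symmetries force the Weinstein/Grillakis--Shatah--Strauss~II framework, which requires global solutions; these are unavailable because the quintic term is $L^2$-critical; restricting to $\mathbb{H}_{per,e}^1$ kills the translation symmetry, reduces the kernel of $\mathcal{L}$ to $(0,\phi)$, and lets the one-symmetry theory of \cite{GSS} run with only local well-posedness. Your added detail on the flow-invariance of the even subspace and on the sharp Gagliardo--Nirenberg threshold is consistent with, and slightly more explicit than, what the authors write.
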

	
	\indent To finish our revision of literature: by considering several cases of $a,b\in\mathbb{R}$ and more general double-power nonlinearities in equation $(\ref{eq2})$, the orbital stability/instability of solitary standing waves has been studied in details in \cite{Ohta}. To do so, the author employed the fact that explicit solutions depending on the hyperbolic functions can be determined. This information is very useful to apply the classical Sturm-Liouville theory to obtain the spectral information of the non-positive spectrum of the associated linear operator $(\ref{L-35i})$ in the infinite wavelength scenario. After that, the stability/instability can be determined by a direct application of \cite{grillakis2} and \cite{GSS} by calculating the value of $d''(\omega)$ for each solitary wave. In particular, for the case $a=b=1$, the explicit solitary wave solution depending on $\omega$ and given by
	\begin{eqnarray}\label{solitw1}
		\phi(x)=\sqrt{\frac{12\omega}{3+\sqrt{48\omega+9}\ cosh(2\sqrt{\omega}x)}},\ \ \ \ \ \ \omega>0.
	\end{eqnarray}	
	is orbitally stable.\\
	\indent Our paper is organized as follows: Section 2 has some basic notations used in paper. In Section 3, we present the existence of periodic waves with fixed period. Section 4 is devoted to the spectral analysis of the linear operator $\mathcal{L}$ in $(\ref{L-35i})$. Finally in Section 5, we present the orbital stability/instability result.

	\section{Notation}\label{sec2}
	Here we introduce the basic notation concerning the periodic Sobolev spaces. For a more complete introduction to these spaces we refer the reader to \cite{Iorio}. By $L^2_{per}:=L^2_{per}([0,L])$, $L>0$, we denote the space of all square integrable functions which are $L$-periodic. For $s\geq0$, the Sobolev space
	$H^s_{per}:=H^s_{per}([0,L])$
	is the set of all periodic distributions such that
	$$
	\|f\|^2_{H^s_{per}}:= L \sum_{k=-\infty}^{\infty}(1+|k|^2)^s|\hat{f}(k)|^2 <\infty,
	$$
	where $\hat{f}$ is the periodic Fourier transform of $f$. The space $H^s_{per}$ is a  Hilbert space with natural inner product denoted by $(\cdot, \cdot)_{H^s}$. When $s=0$, the space $H^s_{per}$ is isometrically isomorphic to the space  $L^2_{per}$, that is, $L^2_{per}=H^0_{per}$ (see, e.g., \cite{Iorio}). The norm and inner product in $L^2_{per}$ will be denoted by $\|\cdot \|_{L^2}$ and $(\cdot, \cdot)_{L^2}$.
	
	In addition, to simplify notation we set
	$$\mathbb{H}^s_{per}:= H^s_{per} \times H^s_{per},\ \
	\mathbb{L}^2_{per}:= L^2_{per} \times L^2_{per},$$
	endowed with their usual norms and scalar products. When necessary and since $\mathbb{C}$ can be identified with $\mathbb{R}^2$, notations above can also be used in the complex/vectorial case in the following sense: for $f\in \mathbb{H}_{per}^s$ we have $f=f_1+if_2\equiv (f_1,f_2)$, where $f_i\in H_{per}^s$ $i=1,2$. For $s\geq0$, the  space $H_{per,e}^s$ is the Sobolev space $H_{per}^s$ constituted by even periodic functions. Similarly, we can define $\mathbb{H}_{per,e}^s$.\\

	\section{Existence of periodic waves}\label{sec3}
	\subsection{Periodic waves with fixed period.} This section is devoted to establish the existence of a smooth curve of periodic solutions for the equation (\ref{eq2}). To do so, we use the quadrature method and similar arguments as in \cite{JaimeFabio1} and used thereafter in \cite{cesar} to construct periodic waves of the form $(\ref{sol21})$ for the same model $(\ref{eq21})$. However, it is important to mention that we fill in some gaps left in the construction of periodic waves in \cite{cesar} since some points in the proof are not totally clear. We see that $(\ref{edo})$ can be expressed in a quadrature form as
	\be
	(\phi')^2=-\frac{\phi^6}{3}-\frac{\phi^4}{2}+\omega\phi^2+B, \label{quadratura-35}
	\ee
	where $B$ is an integration constant.
	
	In order to obtain positive and periodic solutions, let us assume that $\psi=\phi^2$. We obtain a new quadrature form in terms of $\psi$ given by
	\begin{equation}\label{pol1}
		\left(\psi'\right)^2=\di-\frac{4}{3}\psi^4-2\psi^3+4\omega\psi^2+4B\psi=\di\frac{4}{3}P(\psi),
	\end{equation}
	where $P(s)$ indicates the quartic polynomial given by $P(s)=-s^4-\frac{3s^3}{2}+3\omega s^2+3Bs$. 
	
	By $(\ref{pol1})$, we see that $0$ is a root of $P$. To obtain positive and periodic explicit solutions, we assume that the roots of $P$, named as $\alpha_1,\alpha_2, \alpha_3$, are real numbers and satisfying $\alpha_1<0<\alpha_2<\alpha_3$. Thus, $P$ can be factorized as $p(s)=s(s-\alpha_1)(s-\alpha_2)(\alpha_3-s)$, so that 
	\begin{equation}\label{eq1}
		\left(\psi'\right)^2=\frac{4}{3}\psi(\psi-\alpha_1)(\psi-\alpha_2)(\alpha_3-\psi).
	\end{equation}
	\indent Since $\psi>0$, we can consider 
	$\alpha_1<0<\alpha_2<\psi<\alpha_3 $. All the roots $\alpha_i$ need to satisfy, by $(\ref{pol1})$ and $(\ref{eq1})$, the following system of nonlinear equations
	\begin{equation}\label{sis1}
		\left\{ \begin{array}{rcl}
			\alpha_1+\alpha_2+\alpha_3&=&-\frac{3}{2}\\
			\alpha_1\alpha_2+\alpha_1\alpha_3+\alpha_2\alpha_3&=&-3\omega \\
			\alpha_1\alpha_2\alpha_3&=&3B.
		\end{array}
		\right.
	\end{equation}
	
	Using \cite[Formula 257.00]{Byrd} in the quadrature form (\ref{eq1}), we obtain the following explicit periodic solution depending on the Jacobi elliptic function of \textit{dnoidal} type as
	$$
	\psi(x)=\frac{\alpha_3dn^2\left(\frac{2}{\sqrt{3}g}x,k\right)}{1+\beta^2sn^2\left(\frac{2}{\sqrt{3}g} x,k\right)},
	$$
	where $sn$ indicates the elliptic function of $snoidal$ type. Thus, since $\psi=\phi^2$, it follows that
	\begin{equation}
		\phi(x)= \frac{\sqrt{\alpha_3}dn\left(\frac{2}{\sqrt{3}g}x,k\right)}{\sqrt{1+\beta^2sn^2\left(\frac{2}{\sqrt{3}g} x,k\right)}} \label{sol2}
	\end{equation}
	where parameters $g$ and $k$ are expressed by
	\be\label{kg} g=\frac{2}{\sqrt{\alpha_3(\alpha_2-\alpha_1)}}, \quad k^2=\frac{-\alpha_1(\alpha_3-\alpha_2)}{\alpha_3(\alpha_2-\alpha_1)},\ee
	and $\beta^2=-\frac{\alpha_3}{\alpha_1}k^2>0$.
	
	Now, since the dnoidal function has fundamental period $2K$, it follows that $\phi$  in (\ref{sol2}) has fundamental period $\mathcal{T}_{\phi}$ given by
	\be \mathcal{T}_{\phi}(\alpha_1,\alpha_2,\alpha_3,k)=\frac{2\sqrt{3}K(k)}{\sqrt{\alpha_3(\alpha_2-\alpha_1)}}.\label{T}\ee
	By $(\ref{sis1})$, we can obtain $\alpha_1$ and $\alpha_2$ in terms of $\alpha_3$ as
	\be \alpha_1=-\frac{\sqrt{3}\, \sqrt{q(\alpha_3)}+2 {\alpha_3}+3}{4},\, \alpha_2=\frac{\sqrt{3}\, \sqrt{q(\alpha_3)}-2 {\alpha_3}-3}{4},\label{alp}\ee
	where $q$ is polynomial defined by  $q(\alpha_3)=16\omega-4\alpha_3^2-4\alpha_3+3$ which we would like to conclude that it is positive to make sense the expressions in $(\ref{alp})$. In fact, function $ \alpha_2 (\alpha_3) $ defined in (\ref {alp}) is strictly decreasing in terms of $\alpha_3$ and its maximum value occurs at the point $\alpha_3=\frac{\sqrt{1+4\omega}-1}{2}$. Since
	$0<\alpha_2<\alpha_3$, we have
	$$0<\alpha_2<\frac{\sqrt{1+4 \omega}-1}{2}<\alpha_3<\frac{\sqrt{48\omega+9}-3}{4}.$$ It is possible to determine the roots of $q$ in terms of $\omega$ and they are given by $x_{\omega}=-\frac{2 \sqrt{1+4 \omega}+1}{2}$ and 
	$y_{\omega}=\frac{2 \sqrt{1+4 \omega}-1}{2}$. Since $0<\alpha_3<\frac{\sqrt{48\omega+9}-3}{4}<\frac{2 \sqrt{1+4 \omega}-1}{2}$, we obtain that $q(\alpha_3)>0$ as desired. Next, using the expressions $\alpha_1$ and $\alpha_2$ in $(\ref{alp})$, we obtain by $(\ref{sis1})$ that $B$ is negative and it can be given in terms of $\alpha_3$ and $\omega$ as
	\be\label{B}
	B=-\alpha_3\omega+\frac{\alpha_3^3}{3}+\frac{\alpha_3^2}{2}.
	\ee

	We shall give some asymptotic behaviours concerning the period map $\mathcal{T}_{\phi}$ in $(\ref{T})$ and the periodic wave in (\ref{sol2}).
	First, we see that
	\be \mathcal{T}_{\phi}> \frac{2\pi}{\sqrt[4]{4\omega+1}\sqrt{\sqrt{4\omega+1}-1}}.\label{tl}\ee
	In fact, we have that $\alpha_3\longrightarrow\frac{\sqrt{4 \omega+1}-1}{2}$ implies by $(\ref{alp})$ that $\alpha_2\longrightarrow\frac{\sqrt{4 \omega+1}-1}{2}$ and $\alpha_1\rightarrow -\frac{1+2\sqrt{1+4\omega}}{2}$. Since $k^2=\frac{-\alpha_1(\alpha_3-\alpha_2)}{\alpha_3(\alpha_2-\alpha_1)}$, we obtain $k\longrightarrow 0^{+}$, $K(k)\longrightarrow \pi/2^+$ and by $(\ref{T})$, we have $\mathcal{T}_{\phi}\longrightarrow \frac{2\pi}{\sqrt[4]{4\omega+1}\sqrt{\sqrt{4\omega+1}-1}}$. This last fact gives us by (\ref{sol2}) and the facts $dn(u,0^+)\sim 1$ and $sn(u,0^+)\sim sin(u)$ that
	$
	\phi(x)=\sqrt{\frac{\sqrt{4\omega+1}-1}{2}}
	$ is the equilibrium solution for $(\ref{edo})$. On the other hand, $\alpha_3\longrightarrow\frac{\sqrt{48 \omega+9}-3}{4}$ implies $\alpha_2\longrightarrow 0$ and $\alpha_1\longrightarrow -\frac{\sqrt{48\omega+9}+3}{4}$. Since in this case we have $k\longrightarrow 1^-$ and $K(k)\longrightarrow +\infty$, it follows that $\mathcal{T}_{\phi}\longrightarrow +\infty$. Next, the period-map $\mathcal{T}_{\phi}$ is strictly increasing in terms of $\alpha_3$ (see Theorem $\ref{teo1}$) and we obtain the estimate $(\ref{tl})$ as required.\\
	\indent The elliptic functions of dnoidal and snoidal type have an asymptotic behaviour in this special case of infinite wavelength scenario. In fact, since with $dn(u,1^-)\sim sech(u)$ and $sn(u,1^-)\sim tanh(u)$, we obtain that our periodic waves converge exactly to the solitary wave as reported in \cite{Ohta}  and given by
	\be\label{solitw}
	\phi(x)=\sqrt{\frac{12\omega}{3+\sqrt{48\omega+9}\ cosh(2\sqrt{\omega}x)}},\ \ \ \ \ \ \omega>0.
	\ee	
	\indent It is important to notice that the modulus $k$ $(\ref{kg})$ and the fundamental period $\mathcal{T}_{\phi}$ in $(\ref{T})$ can both be seen as functions of $\alpha_3$ and $\omega$. In fact, by (\ref{alp}) we have
	\be
	\mathcal{T}_{\phi}(\alpha_3,\omega)=\frac{\sqrt{8} {{3}^{\frac{1}{4}}} K(k) }{\sqrt{{\alpha_3}} {{\left( q(\alpha_3)\right) }^{\frac{1}{4}}}},\,\, k^2(\alpha_3,\omega)=\frac{\sqrt{3}\, {\alpha_3} \sqrt{q(\alpha_3)}-12 \omega+6 {{{\alpha_3}}^{2}}+9 {\alpha_3}}{2 \sqrt{3}\, {\alpha_3} \sqrt{q(\alpha_3)}}. \label{ka}
	\ee

	Analysis above allows us to obtain a dnoidal wave solution for equation (\ref{eq2}) with a fixed period $L > 0$.  Indeed, let $L>0$ be fixed and consider 
	$$\omega>\di\frac{1}{8}\left(\frac{\sqrt{L^2+16\pi}}{L}+\frac{8\pi}{L^2}-1\right).$$ 
	Since the map $\alpha\in(\frac{\sqrt{1+4 \omega}-1}{2},\frac{\sqrt{48\omega+9}-3}{4}) \mapsto \mathcal{T}_{\phi}(\alpha,\omega)$ is strictly increasing in terms of $\alpha$, we see by the implicit function theorem that there exists a unique  $\alpha_3=\alpha_3(\omega)\in(\frac{\sqrt{1+4 \omega}-1}{2},\frac{\sqrt{48\omega+9}-3}{4})$, such that the fundamental period of the dnoidal wave (\ref{sol2}) is $\mathcal{T}_{\phi}(\alpha_3(\omega),\omega)=L$ (for the proof, see Theorem \ref{teo1} below).

	Let $L>0$ be fixed. In the next result, we show the existence of a smooth curve of $L-$periodic dnoidal waves solutions for the equation (\ref{edo}) depending smoothly on $\omega$.
	
	\begin{teo}\label{teo1} Let $L>0$  be fixed. For $\omega>\frac{1}{8}\left(\frac{\sqrt{L^2+16\pi}}{L}+\frac{8\pi}{L^2}-1\right)$, consider the unique $\alpha_{3,0}=\alpha_3(\omega_0)\in\left(\frac{ \sqrt{1+4\omega_0}-1}{2},\frac{\sqrt{48\omega_0+9}-3}{4}\right)$ such that $\mathcal{T}_{\phi}(\alpha_{3,0},\omega_0)=L$. Then,
		
		\begin{enumerate}
			\item there exist an interval $I_1$ around $\omega_0$, an interval $I_2$ around $\alpha_{3,0}$ and a unique 	smooth function $\Lambda:I_1\mapsto I_2$, such that $\Lambda(\omega_0)=\alpha_{3,0}$ such that
			\be
			\mathcal{T}_{\phi}(\alpha_3(\omega),\omega)=\frac{\sqrt{8} {{3}^{\frac{1}{4}}} K(k) }{\sqrt{{\alpha_3}} {{\left( q(\alpha_3)\right) }^{\frac{1}{4}}}}=L, \label{l1}
			\ee
			where $\omega\in I_1$, $\alpha_3(\omega):=\Lambda(\omega)\in I_2$ and $k^2=k^2(\omega)\in (0,1)$ is defined by (\ref{ka}).
			\item The dnoidal wave solution in (\ref{sol2}), $\phi_{\omega}:=\phi_{\omega}(\cdot, \alpha_3(\omega))$, determined by $\alpha_3(\omega)$, has
			fundamental period L and satisfies (\ref{edo}). Moreover, the mapping
			\be
			\omega\in I_1\mapsto \phi_{\omega}\in H_{per}^n([0,L])
			\ee
			is a smooth function for all $n\in\mathbb{N}.$
			\item The interval $I_1$ can be chosen as $\left(\frac{\sqrt{L^2+16\pi}}{8L}+\frac{\pi}{L^2}-\frac{1}{8},+\infty\right)$.
		\end{enumerate}	
	\end{teo}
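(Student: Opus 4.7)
The plan is to combine asymptotic analysis of the period map with the implicit function theorem applied to the equation $\mathcal{T}_{\phi}(\alpha,\omega)=L$. First, fixing $\omega_0$, I would show existence of $\alpha_{3,0}$ by the intermediate value theorem on the open interval $\left(\frac{\sqrt{1+4\omega_0}-1}{2},\frac{\sqrt{48\omega_0+9}-3}{4}\right)$. The asymptotic computations already recorded in the excerpt give $\mathcal{T}_{\phi}(\alpha,\omega_0)\to \frac{2\pi}{\sqrt[4]{4\omega_0+1}\sqrt{\sqrt{4\omega_0+1}-1}}$ as $\alpha$ approaches the left endpoint (the equilibrium limit, $k\to 0^+$) and $\mathcal{T}_{\phi}(\alpha,\omega_0)\to +\infty$ as $\alpha$ approaches the right endpoint ($k\to 1^-$). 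The hypothesis $\omega_0>\frac{1}{8}\!\left(\frac{\sqrt{L^{2}+16\pi}}{L}+\frac{8\pi}{L^{2}}-1\right)$ is precisely what is needed to guarantee that the left asymptotic value is strictly smaller than $L$, so continuity of $\mathcal{T}_{\phi}$ in $\alpha$ yields at least one $\alpha_{3,0}$ with $\mathcal{T}_{\phi}(\alpha_{3,0},\omega_0)=L$.

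Next I would establish uniqueness and obtain the hypothesis needed for the implicit function theorem by proving that $\alpha\mapsto\mathcal{T}_{\phi}(\alpha,\omega)$ is strictly increasing. Working from the explicit formula $\mathcal{T}_{\phi}(\alpha,\omega)=\tfrac{\sqrt{8}\,3^{1/4}K(k)}{\sqrt{\alpha}\,q(\alpha)^{1/4}}$ together with the formula for $k^{2}(\alpha,\omega)$ in \eqref{ka}, one computes $\partial_{\alpha}\mathcal{T}_{\phi}$ by the chain rule. Since $K'(k)=\frac{E(k)-(1-k^2)K(k)}{k(1-k^2)}>0$ for $k\in(0,1)$ and $\partial_{\alpha}k^{2}(\alpha,\omega)>0$ on the admissible interval (this is checked by a direct sign analysis of the numerator after differentiating the expression in \eqref{ka}), it remains to dominate the negative contribution coming from the prefactor $\alpha^{-1/2}q(\alpha)^{-1/4}$ by the positive contribution coming from $K(k)$. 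This monotonicity argument is the main technical obstacle: the sign must be established on the whole interval $\left(\frac{\sqrt{1+4\omega}-1}{2},\frac{\sqrt{48\omega+9}-3}{4}\right)$, and one likely needs to exploit standard differential identities for $K$ and $E$ together with the relations \eqref{sis1} tying the $\alpha_i$.

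With strict monotonicity in hand, $\partial_{\alpha}\mathcal{T}_{\phi}(\alpha_{3,0},\omega_0)\neq 0$, and since $(\alpha,\omega)\mapsto \mathcal{T}_{\phi}(\alpha,\omega)$ is $C^{\infty}$ on its domain (composition of smooth algebraic expressions and $K(k)$, which is smooth for $k\in(0,1)$), the implicit function theorem yields open intervals $I_1\ni\omega_0$, $I_2\ni\alpha_{3,0}$ and a unique smooth map $\Lambda:I_1\to I_2$ with $\Lambda(\omega_0)=\alpha_{3,0}$ and $\mathcal{T}_{\phi}(\Lambda(\omega),\omega)=L$ for $\omega\in I_1$, proving part (1). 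For part (2), the function $\phi_{\omega}$ defined by \eqref{sol2} is obtained by composing smooth building blocks: $x\mapsto dn(\cdot,k), sn(\cdot,k)$ are $C^{\infty}$ in $x$ for $k\in(0,1)$ and their partial derivatives in $k$ are smooth; $k=k(\alpha_3(\omega),\omega)$ and $g=g(\alpha_3(\omega),\omega)$ depend smoothly on $\omega$ through $\Lambda$; and the denominator $1+\beta^{2}sn^{2}$ stays bounded away from zero on $[0,L]$. Hence $\omega\mapsto \phi_{\omega}\in C^{n}([0,L])\hookrightarrow H^{n}_{per}([0,L])$ is smooth for every $n\in\mathbb{N}$.

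Finally, for part (3) I would use a continuation argument. By strict monotonicity of $\mathcal{T}_{\phi}(\cdot,\omega)$ and its asymptotic behavior at both endpoints, for each $\omega$ satisfying the strict inequality $\frac{2\pi}{\sqrt[4]{4\omega+1}\sqrt{\sqrt{4\omega+1}-1}}<L$ there is a unique $\alpha_3(\omega)$ with $\mathcal{T}_{\phi}(\alpha_3(\omega),\omega)=L$. Solving this inequality explicitly (set $v=\sqrt{4\omega+1}$, obtain $v(v-1)>\tfrac{2\pi}{L}$ and solve the resulting quadratic in $v$) yields exactly the threshold $\omega>\frac{1}{8}\!\left(\frac{\sqrt{L^{2}+16\pi}}{L}+\frac{8\pi}{L^{2}}-1\right)$. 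By uniqueness the local branches produced by the implicit function theorem patch together into a single smooth function $\Lambda$ defined on the full interval $I_{1}=\left(\frac{\sqrt{L^{2}+16\pi}}{8L}+\frac{\pi}{L^{2}}-\frac{1}{8},+\infty\right)$, completing the proof.
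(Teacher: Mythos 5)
Your route is the same as the paper's (smoothness of the period map, strict monotonicity in $\alpha$, the implicit function theorem at $(\alpha_{3,0},\omega_0)$, and extension of $I_1$ by uniqueness of the local branches; the intermediate-value step for existence of $\alpha_{3,0}$ matches the discussion preceding the theorem). But there is a genuine gap at exactly the point you yourself flag as ``the main technical obstacle'': you never prove $\frac{\partial \Psi}{\partial\alpha}>0$, and this inequality is essentially the whole content of the paper's proof. The two facts you invoke concretely, $\frac{dK}{dk}>0$ and $\frac{\partial k}{\partial\alpha}>0$, are not sufficient, because differentiating the prefactor $\alpha^{-1/2}q(\alpha)^{-1/4}$ produces the competing term $-\alpha K(k)\left(\frac{16\omega-8\alpha^{2}-6\alpha+3}{2}\right)$, which can be negative on $\Omega$. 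The paper closes this by an explicit computation: $\frac{dk}{d\alpha}=\frac{r(\alpha)}{k\sqrt{3}\,q(\alpha)^{3/2}\alpha^{2}}$ with $r(\alpha)=6\alpha^{3}+9\alpha^{2}-18\omega\alpha+48\omega^{2}+9\omega>0$, then factoring $\frac{r(\alpha)}{\sqrt{3}\sqrt{q(\alpha)}}$ out of $\frac{\partial\Psi}{\partial\alpha}$ and using the two inequalities $\frac{1}{k}\frac{dK}{dk}-K(k)>0$ (strictly stronger than mere positivity of $K'$) and $\frac{\alpha\sqrt{3q(\alpha)}\,(16\omega-8\alpha^{2}-6\alpha+3)}{2r(\alpha)}<1$ on $\Omega$. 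Without some version of these bounds your argument yields neither the uniqueness of $\alpha_{3,0}$ nor the nondegeneracy hypothesis $\frac{\partial\Psi}{\partial\alpha}(\alpha_{3,0},\omega_0)\neq0$ needed for the implicit function theorem, so parts (1)--(3) all remain conditional.

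A secondary, smaller issue is the algebra in part (3): with $v=\sqrt{4\omega+1}$ the condition $\frac{2\pi}{\sqrt{v(v-1)}}<L$ is equivalent to $v(v-1)>\frac{4\pi^{2}}{L^{2}}$, not $v(v-1)>\frac{2\pi}{L}$, and solving the correct quadratic produces a threshold involving $\pi^{2}$ rather than $\pi$; so your claim that your inequality yields ``exactly'' the displayed lower bound for $\omega$ does not follow as written and should be recomputed (and reconciled with the bound stated in the theorem). The remaining ingredients of your proposal --- smoothness of $\omega\mapsto\phi_{\omega}$ by composition of smooth building blocks with the denominator bounded away from zero, and the patching of local branches into a global $\Lambda$ by uniqueness --- are fine and coincide with the paper's reasoning.
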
	
	\begin{proof}
		We apply the implicit function theorem by following similar ideas as in \cite{JaimeFabio1}. First, let us consider the open set \begin{center}
			$\Omega=\left\{(\alpha,\omega);\ \omega>0,\ \alpha\in\left(\frac{ \sqrt{1+4\omega}-1}{2},\frac{\sqrt{48\omega+9}-3}{4}\right) \right\}\subset \R^2$
		\end{center}
		and define the so-called \textit{period map} $\Psi:\Omega\rightarrow\R$ by
		\be\label{period-map}
		\Psi(\alpha,\omega)=\frac{\sqrt{8} {{3}^{\frac{1}{4}}}  }{\sqrt{{\alpha}} {{\left( q(\alpha)\right) }^{\frac{1}{4}}}}K(k(\omega,\alpha)),
		\ee
		where $k^2$ is defined in (\ref{ka}) and by hypothesis, we have $\Psi(\alpha_{3,0},\omega_0)=L$. We show that $	\frac{\partial \Psi}{\partial \alpha}(\alpha,\omega)>0$ in $\Omega$. In fact, we have that
		\begin{equation}\label{dP}
			\begin{array}{lllll}\displaystyle\frac{\partial \Psi}{\partial \alpha}(\alpha,\omega)&=&\displaystyle\frac{\sqrt{8} {3}^{\frac{1}{4}}}{(\alpha^2 q(\alpha))^{5/4}}\left[ \alpha^2 q(\alpha) \frac{dK}{dk}\frac{dk}{d\alpha}\right.\\\\
				&-&\displaystyle\left.\alpha K(k)\left(\frac{16 \omega-8 {{{\alpha}}^{2}}-6 {\alpha}+3}{2}\right) \right]. \end{array}
		\end{equation}
		On the other hand, 
		\be \frac{dk}{d\alpha}=\frac{1}{2k}\frac{12 {{\alpha}^{3}}+18 {{\alpha}^{2}}-36 \omega \alpha+96 {{\omega}^{2}}+18 \omega}{\sqrt{3}\, {{q(\alpha)}^{\frac{3}{2}}}\, {{\alpha}^{2}}}=\frac{r(\alpha)}{k\sqrt{3}\, {{q(\alpha)}^{\frac{3}{2}}}\, {{\alpha}^{2}}},\label{dk}\ee
		where $r(\alpha)=6 {{\alpha}^{3}}+9 {{\alpha}^{2}}-18 \omega \alpha+48 {{\omega}^{2}} +9 \omega$. Since $\alpha>0$, we obtain  $r(\alpha)>0$ and thus, $\frac{dk}{d\alpha}>0$.
		
		By  (\ref{dP}) and (\ref{dk}), we can write
		\begin{eqnarray*}
			\frac{(\alpha^2 q(\alpha))^{5/4}}{\sqrt{8} {3}^{\frac{1}{4}}}\frac{\partial \Psi}{\partial \alpha}(\alpha,\omega)&=&  \alpha^2 q(\alpha) \frac{dK}{dk}\frac{dk}{d\alpha}-\alpha K(k)\left(\frac{16 \omega-8 {{{\alpha}}^{2}}-6 {\alpha}+3}{2}\right) \\
			&=&  \alpha^2 q(\alpha) \frac{dK}{dk}\frac{r(\alpha)}{k\sqrt{3}\, {{q(\alpha)}^{\frac{3}{2}}}\, {{\alpha}^{2}}}-\alpha K(k)\left(\frac{16 \omega-8 {{{\alpha}}^{2}}-6 {\alpha}+3}{2}\right)\\
			&=&  \frac{dK}{dk}\frac{r(\alpha)}{k\sqrt{3}\, \sqrt{q(\alpha)}\, }-\alpha K(k)\left(\frac{16 \omega-8 {{{\alpha}}^{2}}-6 {\alpha}+3}{2}\right)\\
			&=&\frac{r(\alpha)}{\sqrt{3}\, \sqrt{q(\alpha)}\, } \left( \frac{dK}{dk}\frac{1}{k}-K(k)\frac{\alpha\sqrt{3q(\alpha)}(16 \omega-8 {{{\alpha}}^{2}}-6 {\alpha}+3)}{2r(\alpha)}\right)\\
			&>&\frac{r(\alpha)}{\sqrt{3}\sqrt{q(\alpha)}}\left(\frac{dK}{dk}\frac{1}{k}-K(k) \right),
		\end{eqnarray*}
		where we are using the fact that $\frac{\alpha\sqrt{3q(\alpha)}(16 \omega-8 {{{\alpha}}^{2}}-6 {\alpha}+3)}{2r(\alpha)}<1$ over the set $\Omega$. Since $\frac{dK}{dk}\frac{1}{k}-K(k)>0$, one has $\frac{\partial\Psi}{\partial\alpha}>0$ for every $(\alpha,\omega)\in \Omega.$ By the implicit function theorem, there exists an interval $I_1$ around $\omega_0$, an interval $I_2$ around $\alpha_{3,0}$ and a unique smooth function $\Lambda:I_1\longrightarrow I_2$ such that $\Psi(\Lambda(\omega),\omega)=L$ for every $\omega\in I$. This fact allows to conclude the first two items in theorem. 
		
		Since $\omega$ was chosen arbitrarily in the interval $I_1$, it follows from the uniqueness of the function $\Lambda$ that $I_1$ can be extended to $\left(\frac{\sqrt{L^2+16\pi}}{8L}+\frac{\pi}{L^2}-\frac{1}{8},+\infty\right)$. This completes
		the proof of the theorem.
	\end{proof}	
	
	\begin{coro}\label{coro1} Let $\Lambda: I_1 \rightarrow I_2$ be given in Theorem \ref{teo1}. Thus, $\alpha_3(\omega)=\Lambda(\omega)$ is a strictly increasing function in $I_1$. Moreover, the modulus function given by
		$$k^2(\omega)=\frac{\sqrt{3}\, {\alpha_3} \sqrt{q(\alpha_3)}-12 \omega+6 {{{\alpha_3}}^{2}}+9 {\alpha_3}}{2 \sqrt{3}\, {\alpha_3} \sqrt{q(\alpha_3)}},$$
		is a strictly decreasing function.
	\end{coro}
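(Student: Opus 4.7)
The plan is to differentiate implicitly the period identity $\Psi(\alpha_3(\omega),\omega)=L$ from Theorem \ref{teo1}, and then to pass this information onto the closed form \eqref{ka} for the modulus.

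\textbf{Monotonicity of $\alpha_3$.} Implicit differentiation of $\Psi(\alpha_3(\omega),\omega)=L$ gives
$$\alpha_3'(\omega)=-\frac{\partial_\omega\Psi}{\partial_\alpha\Psi}\bigg|_{(\alpha_3(\omega),\omega)},$$
and Theorem \ref{teo1} already supplies $\partial_\alpha\Psi>0$ on $\Omega$, so the task reduces to showing $\partial_\omega\Psi<0$. Writing $\Psi=\sqrt{8}\cdot 3^{1/4}\alpha^{-1/2}q(\alpha)^{-1/4}K(k(\alpha,\omega))$ and using $\partial_\omega q=16$, I would obtain
$$\partial_\omega\Psi=\sqrt{8}\cdot 3^{1/4}\alpha^{-1/2}q^{-1/4}\!\left[-\tfrac{4K(k)}{q}+K'(k)\,\partial_\omega k\right].$$
Differentiating \eqref{ka} at fixed $\alpha$ yields, after routine algebra, $\partial_\omega(k^2)=-\frac{2\sqrt{3}\,(8\omega+2\alpha+3)}{\alpha\,q(\alpha)^{3/2}}<0$; hence $\partial_\omega k<0$, both bracket terms are negative, and therefore $\partial_\omega\Psi<0$, giving $\alpha_3'(\omega)>0$.

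\textbf{Monotonicity of $k^2$.} I would use the chain rule
$$(k^2)'(\omega)=\partial_\alpha(k^2)\,\alpha_3'(\omega)+\partial_\omega(k^2),$$
where $\partial_\alpha(k^2)=\frac{2r(\alpha_3)}{\sqrt{3}\,q(\alpha_3)^{3/2}\alpha_3^2}>0$ by \eqref{dk}, and $\partial_\omega(k^2)<0$ from above. Equivalently, the identity $\Psi(\alpha_3(\omega),\omega)=L$ can be recast as $K(k(\omega))=\frac{L\sqrt{\alpha_3(\omega)}\,q(\alpha_3(\omega))^{1/4}}{\sqrt{8}\cdot 3^{1/4}}$; differentiating in $\omega$ then reduces the sign of $(k^2)'(\omega)$ to that of $2P(\alpha_3)\,\alpha_3'(\omega)+16\alpha_3$, where $P(\alpha)=16\omega-8\alpha^2-6\alpha+3$ is the quantity already appearing in the proof of Theorem \ref{teo1}. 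Substituting the explicit expression $\alpha_3'(\omega)=-\partial_\omega\Psi/\partial_\alpha\Psi$, clearing positive common factors, and invoking the positivity of $r(\alpha_3)$ together with the standard elliptic estimate $\tfrac{1}{k}K'(k)-K(k)>0$, the claim reduces to an explicit algebraic inequality in $(\alpha_3,\omega)\in\Omega$.

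The main obstacle is this last algebraic reduction: the two contributions to $(k^2)'(\omega)$ have opposite signs, and controlling their balance requires careful bookkeeping of the polynomials $P(\alpha_3)$ and $r(\alpha_3)$, combined with the elliptic inequality above. Once the resulting inequality is verified on $\Omega$, both conclusions of the corollary follow.
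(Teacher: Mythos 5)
Your treatment of the first assertion is correct and is essentially identical to the paper's own proof: one differentiates $\Psi(\Lambda(\omega),\omega)=L$ implicitly, uses $\partial_\alpha\Psi>0$ from Theorem \ref{teo1}, and gets $\partial_\omega\Psi<0$ from the explicit negative term $-4\alpha^2K(k)$ together with $\partial_\omega (k^2)=-\tfrac{2\sqrt{3}\,(2\alpha+8\omega+3)}{\alpha\, q(\alpha)^{3/2}}<0$; this is exactly the chain \eqref{dgo}, \eqref{dLw}, \eqref{dkdomega}.

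For the second assertion your proposal has a genuine gap, and it is the one you flag yourself: after the (correct) reduction of the sign of $(k^2)'(\omega)$ to that of $2P(\alpha_3)\alpha_3'(\omega)+16\alpha_3$, equivalently of $\tfrac{d}{d\omega}\bigl[\alpha_3^2q(\alpha_3)\bigr]$, since $K(k(\omega))=\tfrac{L}{\sqrt{8}\,3^{1/4}}\bigl(\alpha_3^2q(\alpha_3)\bigr)^{1/4}$ along the fixed-period curve, you never verify the needed inequality, so nothing is proved. Worse, that inequality cannot be verified: by the paper's own asymptotics in Section \ref{sec3}, along the curve $\mathcal{T}_{\phi}=L$ one has $k\to0^{+}$ as $\omega$ decreases to the left endpoint of $I_1$ (there $L$ equals the minimal period and $\alpha_3(\omega)$ tends to the equilibrium value), while $\alpha_3^2q(\alpha_3)\geq 12\omega^2+O(\omega^{3/2})$ on the admissible range of $\alpha_3$, so $K(k(\omega))\to\infty$ and $k\to1^{-}$ as $\omega\to+\infty$. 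A continuous function running from $0$ to $1$ cannot be strictly decreasing; the composed modulus in fact increases along the curve, exactly as in the cubic and quintic analogues. What your two competing terms really show is that only the partial derivative $\partial_\omega k$ at fixed $\alpha$ is negative — and that is also all the paper's own proof establishes: it computes \eqref{dkdomega} and stops, never accounting for the positive contribution $\partial_\alpha(k^2)\,\Lambda'(\omega)$ that you correctly single out. So the honest conclusion of your analysis is not that "an algebraic inequality remains to be checked" but that the second statement of Corollary \ref{coro1}, read as a statement about $\omega\mapsto k^2(\alpha_3(\omega),\omega)$, has the wrong sign, and any later step invoking $\tfrac{dk}{d\omega}<0$ along the curve inherits this problem.
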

	\begin{proof} Using the proof of  Theorem \ref{teo1},  we can differentiating the equality  $\Psi(\Lambda(\omega),\omega)=L$ in terms of $\omega$ to obtain
		\be\label{dgo}
		\frac{d\Lambda}{d\omega}=\di-\frac{\frac{\partial \Psi}{\partial \omega}}{\frac{\partial \Psi}{\partial\alpha}}.
		\ee
		Since $\frac{\partial \Psi}{\partial\alpha}>0$, we only need to calculate $\frac{\partial \Psi}{\partial \omega}$. In fact, we have by $(\ref{period-map})$ that
		\begin{equation}\label{dLw}
			\di\frac{\partial\Psi}{d\omega}=\di\frac{\sqrt{8} {3}^{\frac{1}{4}}}{(\alpha^2 q(\alpha))^{5/4}}\left[  \alpha^2q(\alpha) \frac{dK}{dk}\frac{dk}{d\omega}-4\alpha^2K(k)  \right]. 
		\end{equation}
		\indent We need to establish a convenient expression for $\frac{dk}{d\omega}$ in $(\ref{dLw})$. In fact,  we get
		\begin{equation}\label{dkdomega}
			\frac{dk}{d\omega}=-\frac{\sqrt{3}}{k\alpha q(\alpha)^{\frac{3}{2}}}(2\alpha+8\omega+3)<0.
		\end{equation}
		Combining $(\ref{dgo})$, $(\ref{dLw})$ and $(\ref{dkdomega})$ we obtain $\frac{d\Lambda}{d\omega}>0$. This completes the proof.
	\end{proof}
	
	\begin{obs}\label{rem1}
		Using the relations in $(\ref{alp})$ and Corollary $\ref{coro1}$, it follows that $\alpha_1'(\omega)>0$ and $\alpha_2'(\omega)<0$ for all $\omega\in I_1$. Moreover, it is possible to obtain the value of $B$ in terms of $\alpha_1$ and $\omega$ as
		\be\label{Balpha1}
		B=-\alpha_1\omega+\frac{\alpha_1^3}{3}+\frac{\alpha_1^2}{2}.
		\ee
		
	\end{obs}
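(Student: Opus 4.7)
The remark combines a direct algebraic identity with a sign claim about derivatives, and I would address each separately.

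Formula (\ref{Balpha1}) is immediate. Since $\alpha_1\neq 0$ is a root of the quartic $P(s)=-s^4-\frac{3}{2}s^3+3\omega s^2+3Bs$, dividing $P(\alpha_1)=0$ by $\alpha_1$ yields $-\alpha_1^3-\frac{3}{2}\alpha_1^2+3\omega\alpha_1+3B=0$, and solving for $B$ gives the stated expression. The same argument in fact shows $B=-\alpha_i\omega+\alpha_i^3/3+\alpha_i^2/2$ for every nonzero root $\alpha_i$ ($i=1,2,3$), so the analogous formulas for $\alpha_2$ and $\alpha_3$ are also available; in particular, formula (\ref{B}) and (\ref{Balpha1}) are instances of one and the same identity.

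For the signs of $\alpha_1'(\omega)$ and $\alpha_2'(\omega)$, I would differentiate the explicit formulas (\ref{alp}) along the smooth curve $\omega\mapsto\alpha_3(\omega)=\Lambda(\omega)$ supplied by Theorem \ref{teo1}. Two identities simplify the algebra drastically: $\sqrt{3q(\alpha_3)}=2(\alpha_2-\alpha_1)$ (a direct consequence of (\ref{alp})) together with the Vieta relation $\alpha_1+\alpha_2+\alpha_3=-\frac{3}{2}$ from (\ref{sis1}). After routine manipulation, these should yield the compact expressions
\begin{equation*}
\alpha_1'(\omega)=\frac{(\alpha_3-\alpha_2)\,\alpha_3'(\omega)-3}{\alpha_2-\alpha_1},\qquad \alpha_2'(\omega)=\frac{3-(\alpha_3-\alpha_1)\,\alpha_3'(\omega)}{\alpha_2-\alpha_1}.
\end{equation*}
Since $\alpha_2-\alpha_1>0$, both sign claims reduce to the single inequality $(\alpha_3-\alpha_2)\,\alpha_3'(\omega)>3$; the companion condition $(\alpha_3-\alpha_1)\,\alpha_3'(\omega)>3$ needed to force $\alpha_2'<0$ then follows automatically from $\alpha_3-\alpha_1>\alpha_3-\alpha_2$.

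The main obstacle is establishing the inequality $(\alpha_3-\alpha_2)\,\alpha_3'(\omega)>3$. Since $\alpha_3'(\omega)=-\Psi_\omega/\Psi_\alpha$ and both partial derivatives have concrete expressions in terms of $K(k)$, $dK/dk$, $q(\alpha_3)$ and the polynomial $r(\alpha_3)$ from the proofs of Theorem \ref{teo1} and Corollary \ref{coro1}, the target inequality can be rewritten purely in terms of elliptic-integral data. I would verify it via the classical identity $dK/dk=\bigl(E(k)/(1-k^2)-K(k)\bigr)/k$ combined with the monotonicity $(k^2)'(\omega)<0$ already established in Corollary \ref{coro1}; the delicate point is extracting a sufficient lower bound on $\alpha_3'(\omega)$, and this is precisely where the restriction $\omega\in I_1$ (equivalently, the admissible range $k\in(0,1)$) enters.
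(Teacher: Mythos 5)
Your treatment of \eqref{Balpha1} is correct and is exactly the observation underlying \eqref{B}: since $\alpha_1\neq 0$ is a root of $P$, dividing $P(\alpha_1)=0$ by $\alpha_1$ gives the identity, and the same works for any nonzero root. Your compact formulas for the derivatives are also correct; using $\sqrt{3q(\alpha_3)}=2(\alpha_2-\alpha_1)$ and $\alpha_1+\alpha_2+\alpha_3=-\tfrac32$ I recover
$\alpha_1'=\bigl[(\alpha_3-\alpha_2)\alpha_3'-3\bigr]/(\alpha_2-\alpha_1)$ and $\alpha_2'=\bigl[3-(\alpha_3-\alpha_1)\alpha_3'\bigr]/(\alpha_2-\alpha_1)$, consistent with $\alpha_1'+\alpha_2'=-\alpha_3'$. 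So your reduction of the sign claims to the single inequality $(\alpha_3-\alpha_2)\,\alpha_3'(\omega)>3$ is sound algebra.

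The genuine gap is in the last step, and it is not merely technical: you leave that inequality unproven, and it in fact fails for large $\omega$, so no elliptic-integral estimate can close it. Along the fixed-period branch, $\alpha_3(\omega)>\frac{\sqrt{1+4\omega}-1}{2}\to\infty$, hence $\alpha_1=-\tfrac32-\alpha_2-\alpha_3\to-\infty$ as $\omega\to\infty$, whereas $\alpha_1$ has a finite limit as $\omega$ decreases to the left endpoint of $I_1$ (the $k\to0^+$ equilibrium limit); therefore $\alpha_1$ cannot be increasing on all of $I_1$, i.e. $(\alpha_3-\alpha_2)\alpha_3'>3$ must fail. Quantitatively, since $K(k)^2=\frac{L^2}{8\sqrt3}\,\alpha_3\sqrt{q(\alpha_3)}\to\infty$ one has $k\to1^-$, $\alpha_2\to0^+$, $\alpha_3\sim\frac{\sqrt{48\omega+9}-3}{4}$, and to leading order $(\alpha_3-\alpha_2)\alpha_3'\to\tfrac32<3$, giving $\alpha_1'<0$ for large $\omega$. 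This also undercuts your route to $\alpha_2'<0$, which you derive only as a consequence of the stronger (false) inequality; $\alpha_2'<0$ itself is consistent with the endpoint limits but would need an independent argument. Be aware, too, that the ingredient you planned to import from Corollary \ref{coro1}, namely $k'(\omega)<0$, is itself inconsistent with the endpoint behaviour $k\to0^+$ at the left end of $I_1$ and $k\to1^-$ as $\omega\to\infty$; and even granting it, combining $-\alpha_1(\alpha_3-\alpha_2)=\frac{12}{L^2}k^2K(k)^2$ with $\alpha_1'+\alpha_2'+\alpha_3'=0$ yields $\alpha_1'<0$, the opposite sign. In short, the obstruction lies in the statement (and in the paper's one-line justification via Corollary \ref{coro1}), not in your algebra: the remark as stated cannot be proved for all $\omega\in I_1$.
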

	
	\subsection{Characterization of positive and periodic waves.}\label{sec3-2} Next, let $\omega_0>0$ be fixed. In what follows, let us define the period map in Theorem $\ref{teo1}$ in terms of $\alpha\in \left(\frac{ \sqrt{1+4\omega_0}-1}{2},\frac{\sqrt{48\omega_0+9}-3}{4}\right):=I_3$, by $T(\alpha):=\Psi(\alpha,\omega_0)$. We see that 
	\be\label{perT}T:I_3\longrightarrow\left( \frac{2\pi}{\sqrt[4]{4\omega_0+1}\sqrt{\sqrt{4\omega_0+1}-1}},+\infty\right),\ee
	is a smooth strictly increasing function according to the proof of Theorem $\ref{teo1}$. In addition, using the standard ODE theory, we obtain for $\alpha \in I_3$ that all positive even periodic waves satisfy the IVP
	\begin{equation}\label{IVP}\left\{\begin{array}{lllll}
			-\phi''+\omega_0\phi-\phi^3-\phi^5=0,\\
			\phi(0)=\sqrt{\alpha},\\
			\phi'(0)=0.\end{array}\right.\end{equation}
	
	By $(\ref{quadratura-35})$, we have at $x=0$ that $\alpha$ and $B$ must satisfy the equality \be\label{eqalpha}-\frac{\alpha^3}{3}-\frac{\alpha^2}{2}+\omega_0\alpha+B=0,\ee
	so that $\alpha:\left(\frac{1-(4\omega_0+1)^{\frac{3}{2}}+6\omega_0}{12},0\right)\longrightarrow I_3$ can be seen as a smooth function depending on $B$. Let us consider $B_{\omega_0}:=  \frac{1-(4\omega_0+1)^{\frac{3}{2}}+6\omega_0}{12}<0$. By the chain rule, we deduce that the period map $T$ is also smooth in terms of $B\in (B_{\omega_0},0)$. In addition, we have 
	\be\label{permap1}
	\frac{dT}{dB}=\frac{dT}{d\alpha}\frac{d\alpha}{dB}.
	\ee
	\indent We need to know the monotonicity behaviour of $\frac{dT}{dB}$. Indeed, differentiating the initial condition $\phi(0)=\sqrt{\alpha}$ in $(\ref{IVP})$ with respect to $B$, we obtain $\frac{d\alpha}{dB}=2\phi(0)\frac{d\phi}{dB}(0)$. This last fact enables to differentiate equality $(\ref{eqalpha})$ in terms of $B$ to get
	\be\label{deralpha}
	(-\phi(0)^5-\phi(0)^3+\omega_0\phi(0)^2)\frac{d\phi}{dB}(0)=-1.
	\ee
	Since $\phi''(0)= -\phi(0)^5-\phi(0)^3+\omega_0\phi(0)^2$ by $(\ref{edo})$, we obtain from the fact $\phi(0)=\sqrt{\alpha}$ is the maximum point of the periodic wave $\phi$ that $\phi''(0)<0$. This last fact allows to conclude by $(\ref{deralpha})$ that $\frac{d\alpha}{dB}>0$. Thus, using $(\ref{permap1})$ joint with the fact $\frac{dT}{d\alpha}>0$ (see Theorem $\ref{teo1}$), we obtain $\frac{dT}{dB}>0$ for all $B\in (B_{\omega_0},0)$. Summarizing the arguments above, we have
	\begin{prop}\label{prop2} Let $\omega_0>0$ be fixed. The period map $$T:(B_{\omega_0},0)\longrightarrow \left( \frac{2\pi}{\sqrt[4]{4\omega_0+1}\sqrt{\sqrt{4\omega_0+1}-1}},+\infty\right),$$ is a smooth function in terms of $B\in(B_{\omega_0},0)$  and $\frac{dT}{dB}>0$.
	\end{prop}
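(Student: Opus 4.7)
The plan is to derive the smoothness and monotonicity of $T$ as a function of $B$ by viewing $T(B)=T(\alpha(B))$ through the chain rule, exploiting the monotonicity $\frac{dT}{d\alpha}>0$ already established in Theorem \ref{teo1} and Corollary \ref{coro1}. The entire task therefore reduces to showing that the map $B\mapsto\alpha(B)$ is a well-defined, smooth, strictly increasing diffeomorphism between $(B_{\omega_0},0)$ and $I_3$.

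First I would justify the existence and smoothness of $\alpha(B)$. Fix $\omega_0>0$. The identity obtained by evaluating the quadrature form at $x=0$, namely
\begin{equation*}
H(\alpha,B):=-\frac{\alpha^3}{3}-\frac{\alpha^2}{2}+\omega_0\alpha+B=0,
\end{equation*}
is a smooth functional relation between $B$ and $\alpha=\phi(0)^2$. To apply the implicit function theorem I need $\partial_\alpha H(\alpha,B)\ne 0$ on the relevant range. A short calculation shows
\begin{equation*}
\partial_\alpha H(\alpha,B)=-\alpha^2-\alpha+\omega_0=-\sqrt{\alpha}\,\phi''(0)/\phi(0),
\end{equation*}
so that its sign is dictated by $\phi''(0)$. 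Because $\sqrt{\alpha}=\phi(0)$ is the maximum of the even periodic profile $\phi$ (by construction of the dnoidal wave), the ODE \eqref{edo} forces $\phi''(0)=\omega_0\phi(0)-\phi(0)^3-\phi(0)^5<0$. Hence $\partial_\alpha H\ne 0$ and the implicit function theorem yields a smooth function $\alpha=\alpha(B)$ on $(B_{\omega_0},0)$. The endpoints of the $B$-interval correspond precisely to the boundary values of $I_3$: $B=0$ gives the heteroclinic/limiting case where $\alpha_2\to 0$ and $\alpha_3\to(\sqrt{48\omega_0+9}-3)/4$, while $B=B_{\omega_0}$ corresponds to the equilibrium value $\alpha=(\sqrt{1+4\omega_0}-1)/2$.

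Next I would extract the sign of $\frac{d\alpha}{dB}$. Differentiating $H(\alpha(B),B)=0$ directly gives
\begin{equation*}
\frac{d\alpha}{dB}=-\frac{1}{\partial_\alpha H(\alpha,B)}=\frac{\phi(0)}{\sqrt{\alpha}\,\phi''(0)}\cdot(-1)>0,
\end{equation*}
since $\phi''(0)<0$. Combining this with the already-proved inequality $\frac{dT}{d\alpha}>0$ from Theorem \ref{teo1}, the chain rule
\begin{equation*}
\frac{dT}{dB}=\frac{dT}{d\alpha}\,\frac{d\alpha}{dB}>0
\end{equation*}
finishes the monotonicity. Smoothness of $T$ as a function of $B$ follows from the smoothness of both factors in this composition. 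The target interval claimed in the statement coincides with the image of $T$ established in \eqref{perT}, so no additional work is required there.

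The main conceptual obstacle is the sign of $\phi''(0)$, and more generally the identification of $\sqrt{\alpha}$ as the maximum of $\phi$ rather than, say, a turning point of different type. This is not a calculation issue but a structural one: one must keep track of the fact that we restricted attention to positive even solutions oscillating between $\alpha_2$ and $\alpha_3$, with initial value $\phi(0)=\sqrt{\alpha_3}$, so that $x=0$ is the maximum and consequently $\phi''(0)<0$ via the ODE. Once this is made explicit, the rest of the argument is a short application of the implicit function theorem and the chain rule.
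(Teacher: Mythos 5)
Your argument is correct and follows essentially the same route as the paper: the chain rule $\frac{dT}{dB}=\frac{dT}{d\alpha}\frac{d\alpha}{dB}$, the inequality $\frac{dT}{d\alpha}>0$ from Theorem \ref{teo1}, and the sign $\frac{d\alpha}{dB}>0$ extracted from the relation at $x=0$ together with $\phi''(0)<0$ at the maximum point (your implicit-function-theorem phrasing merely makes explicit what the paper asserts when it says $\alpha$ is smooth in $B$). Only correct the harmless algebraic slip: since $\phi''(0)=\sqrt{\alpha}\,(\omega_0-\alpha-\alpha^2)$, the right identity is $\partial_\alpha H(\alpha,B)=\omega_0-\alpha-\alpha^2=\phi''(0)/\phi(0)$, not $-\sqrt{\alpha}\,\phi''(0)/\phi(0)$ (which is why your final display needs the compensating factor $(-1)$); with the corrected identity one gets directly $\frac{d\alpha}{dB}=-1/\partial_\alpha H=-\sqrt{\alpha}/\phi''(0)>0$.
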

	\begin{flushright}
		$\square$
	\end{flushright}

	\indent Combining the results in Theorem \ref{teo1} and Proposition \ref{prop2}, we obtain the following result:
	
	\begin{teo}\label{teo-uniq}
		Let $L>0$ be fixed and consider $\omega>\frac{1}{8}\left(\frac{\sqrt{L^2+16\pi}}{L}+\frac{8\pi}{L^2}-1\right)$. All positive and even periodic solution associated to the equation $(\ref{edo})$ for the case $a=b=1$ has the dnoidal profile in $(\ref{sol2})$.
	\end{teo}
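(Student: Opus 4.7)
The plan is to combine the strict monotonicity of the period map obtained in Proposition \ref{prop2} with a phase-plane classification of the positive periodic orbits of (\ref{edo}) in order to prove uniqueness.

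First, I would let $\tilde\phi$ be an arbitrary positive, even, $L$-periodic solution of (\ref{edo}) in the case $a=b=1$. Evenness forces $\tilde\phi'(0)=0$, so $\tilde\phi$ solves the IVP (\ref{IVP}) with $\tilde\phi(0)=\sqrt{\tilde\alpha}$ for some $\tilde\alpha>0$, and evaluating the quadrature identity (\ref{quadratura-35}) at $x=0$ yields a uniquely determined integration constant
\[
\tilde B \;=\; \frac{\tilde\alpha^{3}}{3}+\frac{\tilde\alpha^{2}}{2}-\omega\tilde\alpha.
\]
By uniqueness of solutions to (\ref{IVP}), $\tilde\phi$ is completely determined by the pair $(\omega,\tilde B)$.

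Second, I would check that $\tilde B$ necessarily lies in the open interval $(B_\omega,0)$, with $B_\omega=\tfrac{1-(4\omega+1)^{3/2}+6\omega}{12}$. This is a phase-plane analysis of the quartic $P(s)=-s^{4}-\tfrac{3s^{3}}{2}+3\omega s^{2}+3Bs$ from (\ref{pol1}): as $B$ varies, the value $B=0$ produces the separatrix through the origin, the value $B=B_\omega$ reduces to the positive equilibrium $\phi\equiv\sqrt{(\sqrt{1+4\omega}-1)/2}$ (equivalently $P$ acquires a positive double root), while $B>0$ generates unbounded orbits and $B<B_\omega$ destroys the three-real-roots configuration $\alpha_1<0<\alpha_2<\alpha_3$ required for bounded positive oscillations. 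Consequently, the positive non-constant $L$-periodic orbits of (\ref{edo}) occur exactly in the regime $B\in(B_\omega,0)$, which is precisely the domain of Proposition \ref{prop2}; in particular $\tilde B\in (B_\omega, 0)$.

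Finally, Proposition \ref{prop2} provides the strictly increasing period map
\[
T:(B_\omega,0)\longrightarrow \Bigl(\tfrac{2\pi}{\sqrt[4]{4\omega+1}\sqrt{\sqrt{4\omega+1}-1}},+\infty\Bigr),
\]
which is therefore injective. The hypothesis on $\omega$ is precisely the condition that places $L$ inside the range of $T$, so there is a unique $B^{\ast}\in(B_\omega,0)$ with $T(B^{\ast})=L$. Since the dnoidal wave $\phi_\omega$ from Theorem \ref{teo1} has fundamental period $L$ and is associated to some value of $B$ in $(B_\omega,0)$, injectivity forces $\tilde B=B^{\ast}$. Hence $\tilde\alpha$ matches the initial datum of $\phi_\omega$, and uniqueness for the IVP (\ref{IVP}) yields $\tilde\phi\equiv\phi_\omega$. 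The main obstacle is the classification step: the monotonicity of $T$ is already in hand, so the delicate piece is confirming that no positive periodic orbit lives outside the regime $B\in(B_\omega,0)$, which amounts to careful bookkeeping of the root structure of $P(s)$ as $B$ sweeps across the real line.
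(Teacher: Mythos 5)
Your argument is correct and follows essentially the same route as the paper: reduce an arbitrary positive, even periodic solution to the IVP (\ref{IVP}), identify its integration constant $B\in(B_{\omega},0)$ via (\ref{eqalpha}), and conclude by the strict monotonicity of the period map in Proposition \ref{prop2} together with Theorem \ref{teo1} and ODE uniqueness. The only cosmetic slip is the claim that $B>0$ produces unbounded orbits --- those orbits are bounded but sign-changing, which still excludes them --- and the paper simply absorbs your phase-plane classification step into its assertion that all positive even periodic waves satisfy (\ref{IVP}) with $\alpha\in I_3$.
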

	
	\begin{proof}
		Under the assumptions of the proposition, we obtain by the monotonicity of the period map $T:(B_{\omega_0},0)\longrightarrow \left( \frac{2\pi}{\sqrt[4]{4\omega_0+1}\sqrt{\sqrt{4\omega_0+1}-1}},+\infty\right)$ the existence of a unique $B_0\in (B_{\omega_0},0)$ such that $T(B_0)=L$. This unique $B_0$ corresponds to a unique $\alpha_0:=\alpha_{3,0}\in I_3$ by using $(\ref{eqalpha})$ and the fact that $\frac{d\alpha}{dB}>0$ for all $B\in(B_{\omega_0},0)$. The result is then established by a direct application of Theorem $\ref{teo1}$.
	\end{proof}
	
	\section{Spectral Properties.}\label{sec4}
	\subsection{Floquet theory framework.}\label{sec4-1}
	Our intention is to recall some basic facts about Floquet's theory (for further details, see \cite{est} and \cite{magnus}). In what follows, let $Q$ be a smooth $T$-periodic function.  Consider $\mathcal{P}$ the Hill operator defined in $L_{per}^2([0,T])$, with domain $D(\mathcal{P})=H_{per}^2([0,T])$ as
	$$
	\mathcal{P}=-\partial_x^2+Q(x).
	$$
	It is well known that the spectrum of $\mathcal{P}$  is formed by an unbounded sequence of
	real eigenvalues
	\be\label{seq}
	\lambda_0 < \lambda_1 \leq \lambda_2 < \lambda_3 \leq \lambda_4<
	\cdots\; < \lambda_{2n-1} \leq \lambda_{2n}\; \cdots,
	\ee
	where equality means that $\lambda_{2n-1} = \lambda_{2n}$  is a
	double eigenvalue. In addition, according with the classical Oscillation Theorem, we see that the spectrum of $\mathcal{P}$ can be characterized by the number of zeros
	of the associated eigenfunctions. In fact, if $\varphi$ is an eigenfunction associated to either $\lambda_{2n-1}$ or $\lambda_{2n}$, then $\varphi$ has exactly
	$2n$ zeros in the half-open
	interval $[0, T)$. In particular, the even eigenfunction associated to the first eigenvalue $\lambda_0$ has no zeros in $[0, T]$.
	
	Let $\varphi$ be a non-trivial $T$-periodic solution of the Hill equation
	\begin{equation}\label{zeqL}
		-f''+Q(x)f=0.
	\end{equation}
	Consider $y$ the another solution of \eqref{zeqL} which is linearly independent with $\varphi$.  There exists a constant $\theta$ (depending on $y$ and $\varphi$) such that
	\be\label{theta1}
	y(x+T)=y(x)+\theta \varphi(x).
	\ee
	Thus, we see that $\theta=0$ is a necessary and sufficient condition to become $y$ as a periodic solution for $(\ref{zeqL}).$
	
	Next result gives that it is possible to decide the exact position of the zero eigenvalue by knowing the precise sign of $\theta$ in $(\ref{theta1})$.
	
	\begin{lema}\label{specprop}
		Let $\theta$ be the constant given by (\ref{theta1}) and suppose that $\varphi$ is an $T-$periodic solution for the equation $(\ref{zeqL})$ containing only two zeros over $[0,T).$ The
		eigenvalue $\lambda=0$ is simple if and only if $ \theta \neq 0$.
		Moreover, if $\theta \neq 0$, then  $ \lambda_{1}=0$ if $\theta <
		0$, and $ \lambda_{2}=0$ if $\theta > 0$.
	\end{lema}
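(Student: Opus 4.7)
The plan is to attack the two claims separately, reducing the positional claim to computing the derivative of the Hill discriminant at $\lambda=0$. For the simplicity claim, note that $\varphi$ and $y$ form a basis of the two-dimensional solution space of $(\ref{zeqL})$, and the eigenspace of $\mathcal{P}$ at $\lambda=0$ is exactly the subspace of $T$-periodic solutions. Thus $\mathrm{span}\{\varphi\}$ is always contained in this eigenspace, while by $(\ref{theta1})$ we have $y\in\ker\mathcal{P}$ if and only if $\theta=0$. Hence $\dim\ker\mathcal{P}=1$ iff $\theta\neq 0$, and $\dim\ker\mathcal{P}=2$ iff $\theta=0$ (in which case the oscillation theorem forces $\lambda_{1}=\lambda_{2}=0$).

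Now assume $\theta\neq 0$. By the oscillation theorem and the two-zero hypothesis on $\varphi$, we have $0\in\{\lambda_{1},\lambda_{2}\}$. To decide which, I would consider the perturbed equation $-f''+(Q-\lambda)f=0$, and let $\varphi_{\lambda},y_{\lambda}$ be the solutions coinciding with $\varphi,y$ at $\lambda=0$, with initial data fixed in $\lambda$. Writing
\[
\varphi_{\lambda}(x+T)=A(\lambda)\varphi_{\lambda}(x)+B(\lambda)y_{\lambda}(x),\qquad y_{\lambda}(x+T)=C(\lambda)\varphi_{\lambda}(x)+D(\lambda)y_{\lambda}(x),
\]
one has $A(0)=D(0)=1$, $B(0)=0$, $C(0)=\theta$. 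Since the trace of the monodromy operator is basis-invariant, the Hill discriminant equals $\Delta(\lambda)=A(\lambda)+D(\lambda)$. Differentiating the perturbed ODE with $\dot{(\,)}:=\partial_{\lambda}|_{\lambda=0}$ yields $\mathcal{P}\dot\varphi=\varphi$ and $\mathcal{P}\dot y=y$, both with vanishing Cauchy data at $x=0$.

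The central computation is the identity $\Delta'(0)=-\theta\int_{0}^{T}\varphi^{2}\,dx$. I would obtain it from three applications of Green's identity on $[0,T]$ to the pairs $(\varphi,\dot\varphi)$, $(y,\dot\varphi)$ and $(\varphi,\dot y)$, using $\mathcal{P}\varphi=\mathcal{P}y=0$, which produce the boundary Wronskian relations $W(\varphi,\dot\varphi)(T)=-\int_{0}^{T}\varphi^{2}$, $W(y,\dot\varphi)(T)=-\int_{0}^{T}\varphi y$ and $W(\varphi,\dot y)(T)=-\int_{0}^{T}\varphi y$. Expressing these three Wronskians in terms of $A'(0),B'(0),C'(0),D'(0)$ by differentiating the displayed quasi-periodicity relations, normalizing $W(\varphi,y)(0)=1$, and substituting $y(T)=y(0)+\theta\varphi(0)$, $y'(T)=y'(0)+\theta\varphi'(0)$, one arrives at $A'(0)=\int_{0}^{T}\varphi y-\theta\int_{0}^{T}\varphi^{2}$ and $D'(0)=-\int_{0}^{T}\varphi y$. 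The cross-terms cancel and the claimed formula follows.

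To finish, I would invoke the classical description of the Hill discriminant: between consecutive simple $T$-periodic eigenvalues $\lambda_{1}<\lambda_{2}$, both roots of $\Delta(\lambda)=2$, one has $\Delta(\lambda)\geq 2$ throughout, so $\Delta'(\lambda_{1})>0$ and $\Delta'(\lambda_{2})<0$. Since $\int_{0}^{T}\varphi^{2}>0$, the key formula $\Delta'(0)=-\theta\int_{0}^{T}\varphi^{2}$ yields $0=\lambda_{1}\iff\Delta'(0)>0\iff\theta<0$ and $0=\lambda_{2}\iff\Delta'(0)<0\iff\theta>0$, as desired. The main obstacle I expect is the Green's-identity bookkeeping producing the identity $\Delta'(0)=-\theta\int_{0}^{T}\varphi^{2}$: the individual quantities $A'(0),D'(0)$ depend on the non-canonical basis $\{\varphi,y\}$ and on the normalization of $y$ modulo $\varphi$, but their sum must be invariant, so one has to track signs carefully and verify the cancellation of the basis-dependent $\int_{0}^{T}\varphi y$ terms in the final trace.
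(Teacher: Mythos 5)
Your proposal is correct, and it is worth noting that the paper itself contains no proof of this lemma: it simply refers to \cite{natali1} and \cite{neves}, and your argument essentially reconstructs the proof of those references rather than following any computation displayed in the paper. The simplicity part is exactly as in the cited works: the kernel of the Hill operator is the space of $T$-periodic solutions of \eqref{zeqL}, which contains $y$ if and only if $\theta=0$. For the positional part, your key identity is right and the bookkeeping does close up: with the normalization $W(\varphi,y)\equiv 1$ (the same convention the paper uses later in Lemma \ref{teo13}), Green's identity gives $W(\varphi,\dot\varphi)(T)=B'(0)=-\int_0^T\varphi^2\,dx$, $W(y,\dot\varphi)(T)=-A'(0)+\theta B'(0)=-\int_0^T\varphi y\,dx$ and $W(\varphi,\dot y)(T)=D'(0)=-\int_0^T\varphi y\,dx$, so indeed $A'(0)=\int_0^T\varphi y\,dx-\theta\int_0^T\varphi^2\,dx$, $D'(0)=-\int_0^T\varphi y\,dx$, the basis-dependent terms cancel, and $\Delta'(0)=-\theta\int_0^T\varphi^2\,dx$. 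Two small remarks: (i) the strict inequalities $\Delta'(\lambda_1)>0$, $\Delta'(\lambda_2)<0$ need not be imported from the coexistence theorem, since your identity already yields $\Delta'(0)\neq 0$ once $\theta\neq 0$; you only need the soft band-structure facts that $\Delta\geq 2$ on $[\lambda_1,\lambda_2]$ and $\Delta<2$ on the adjacent stability intervals to read off the sign; (ii) as stated, the sign of $\theta$ depends on the choice of the second solution $y$ (replacing $y$ by $cy$ rescales $\theta$), so the Wronskian normalization you impose is not optional but is precisely what makes the sign criterion meaningful and consistent with its application in the paper.
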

	\begin{proof}
		See \cite{natali1} and \cite{neves}.
	\end{proof}
	
	\subsection{Spectral analysis for the operator $\mathcal{L}$.}\label{sec4-2} In this subsection, we study the linearized operator $\mathcal{L}$ defined in $(\ref{L-35i})$. First of all, we see that $\mathcal{L}$ is a diagonal operator and thus, it is possible to obtain the spectral information concerning the non-positive spectrum of $\mathcal{L}$ by analysing separately the non-positive spectrum of the following linear operators \be\mathcal{L}_1=-\partial_x^2+\omega-3\phi^2-5\phi^4,\label{op1}\ee 
	and 
	\be\mathcal{L}_2=-\partial_x^2+\omega-\phi^2-\phi^4.\label{op2}\ee 
	Concerning $\mathcal{L}_2$, we have the following result:

	\begin{lema}\label{lema14}
		Let $L>0$ be fixed and consider $\omega>\frac{1}{8}\left(\frac{\sqrt{L^2+16\pi}}{L}+\frac{8\pi}{L^2}-1\right)$. Operator
		$\mathcal{L}_{2}$
		in $(\ref{op2})$ defined in $L_{per}^2([0,L])$ with domain in
		$H_{per}^2([0,L])$ is a non-negative operator. Zero is a
		simple eigenvalue with eigenfunction $\phi$. Moreover, the remainder of the spectrum of $\mathcal{L}_{2}$
		is constituted by a discrete set
		of eigenvalues bounded away from zero.
	\end{lema}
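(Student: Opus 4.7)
The proof I propose is short and rests entirely on the Floquet/Oscillation framework recalled in Subsection 4.1 together with a direct verification that $\phi$ itself lies in the kernel of $\mathcal{L}_2$.

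The first step is to observe that the equation $(\ref{edo})$ in the case $a=b=1$, namely $-\phi''+\omega\phi-\phi^3-\phi^5=0$, can be rewritten as $\mathcal{L}_2\phi=0$. Thus $\phi\in H_{per}^2([0,L])$ is an eigenfunction associated with the eigenvalue $\lambda=0$. Since the dnoidal profile $\phi$ in $(\ref{sol2})$ is strictly positive on $[0,L]$ (indeed $\sqrt{\alpha_2}\leq\phi(x)\leq \sqrt{\alpha_3}$, with $\alpha_2>0$), the eigenfunction $\phi$ has no zeros in the half-open interval $[0,L)$.

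By the Oscillation Theorem recalled in Subsection 4.1, the only eigenvalue of a Hill operator whose associated eigenfunction has no zeros in $[0,L)$ is the lowest one $\lambda_0$, and furthermore $\lambda_0$ is always simple in the sequence $(\ref{seq})$. Applying this to $\mathcal{L}_2$, we conclude that $\lambda_0(\mathcal{L}_2)=0$ is simple, with eigenfunction $\phi$. Consequently every other eigenvalue in the spectrum satisfies $\lambda_n>0$ for $n\geq 1$, which gives the non-negativity of $\mathcal{L}_2$.

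Finally, the general Hill spectral theory (the sequence $(\ref{seq})$ is discrete and $\lambda_n\to+\infty$) yields that the remaining eigenvalues $\lambda_1\leq \lambda_2\leq\cdots$ form a discrete set bounded away from $0$, completing the proof. I do not anticipate any real obstacle here: once the identity $\mathcal{L}_2\phi=0$ is noted and the positivity of the dnoidal profile is invoked, the conclusion follows immediately from the standard Floquet oscillation principle, with no need for the monotonicity of the period map (that information will only become essential for the harder operator $\mathcal{L}_1$).
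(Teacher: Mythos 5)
Your argument is correct and coincides with the paper's own proof: the paper likewise notes that $(\ref{edo})$ gives $\mathcal{L}_2\phi=0$ and then uses the positivity of the dnoidal profile together with the classical Floquet/oscillation theory to conclude that $\lambda_0=0$ is the simple ground-state eigenvalue, so the rest of the spectrum is positive, discrete and bounded away from zero. No further comment is needed.
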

	\begin{proof}
		By $(\ref{edo})$, we see that $\mathcal{L}_2\phi=0$. Since $\phi>0$, we obtain by the classical Floquet theory that $\lambda_0=0$ is the first eigenvalue for $\mathcal{L}_2$ which is clearly simple by $(\ref{seq})$. 
	\end{proof}
	\indent Before giving the behaviour of the non-positive spectrum of $\mathcal{L}_1$, we need some preliminary tools. First of all for the solution $\phi$ obtained by Theorem $\ref{teo-uniq}$, we have that $\phi'$ is a solution of the Hill equation $\mathcal{L}_1\phi'=0$ by deriving equation $(\ref{edo})$ with respect to $x$. In addition, the dnoidal solution in $(\ref{sol2})$ is positive and $\phi'$ is an odd function having two zeros in the interval $[0,L)$. By the classical Floquet theory, we see that $\lambda_1=0$ or $\lambda_2=0$. We show that $\lambda_1=0$ and it results to be simple.
	\begin{lema} \label{teo13}
		We have, $\frac{dT}{dB}=-\frac{\theta}{2}$, where $\theta$ is the constant in \eqref{theta1}.
	\end{lema}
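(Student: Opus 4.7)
The strategy is to recognise the non-periodic companion solution of the Hill equation $\mathcal{L}_1 f=0$ as a multiple of $\partial_{B}\phi$, and then translate the $B$-variation of the fundamental period directly into the Floquet defect coefficient $\theta$.

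First, I would note that by Theorem \ref{teo-uniq} and Proposition \ref{prop2}, the dnoidal wave $\phi(\cdot ;B)$ depends smoothly on $B\in(B_{\omega_0},0)$ and satisfies $-\phi''+\omega\phi-\phi^{3}-\phi^{5}=0$. Differentiating this ODE in $B$ shows that $\partial_{B}\phi$ solves the Hill equation $\mathcal{L}_{1} f=0$. Since $\phi$ is even in $x$, so is $\partial_{B}\phi$, while $\phi'$ is odd; hence $\{\phi',\partial_{B}\phi\}$ is a fundamental system for $\mathcal{L}_{1}f=0$.

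Next, I would identify the correct normalisation. Differentiating the quadrature $(\phi')^{2}=-\tfrac{\phi^{6}}{3}-\tfrac{\phi^{4}}{2}+\omega\phi^{2}+B$ in $B$ gives $2\phi'\partial_{B}\phi'=(-2\phi^{5}-2\phi^{3}+2\omega\phi)\partial_{B}\phi+1=2\phi''\partial_{B}\phi+1$, so that the Wronskian satisfies
$$W(\phi',\partial_{B}\phi)(x)=\phi'(x)\partial_{B}\phi'(x)-\phi''(x)\partial_{B}\phi(x)=\tfrac{1}{2}.$$
Setting $y:=2\,\partial_{B}\phi$, one obtains $W(\phi',y)\equiv 1$, so that $y$ is the canonical linearly independent solution appearing in \eqref{theta1}.

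Finally, to extract $\theta$, I would differentiate the identity $\phi(x+T(B);B)=\phi(x;B)$ with respect to $B$, giving
$$\phi'(x+T)\,T'(B)+\partial_{B}\phi(x+T)=\partial_{B}\phi(x).$$
Using that $\phi'$ is $T$-periodic, this rearranges to $y(x+T)-y(x)=-2T'(B)\phi'(x)$. Comparing with $y(x+T)=y(x)+\theta\phi'(x)$ yields $\theta=-2T'(B)$, that is, $\tfrac{dT}{dB}=-\tfrac{\theta}{2}$.

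The only delicate point is pinning down the normalisation of the second solution so that one recovers precisely the $\theta$ of \eqref{theta1}; this is what forces the factor of $2$ in the definition $y=2\,\partial_{B}\phi$ and is handled by the Wronskian computation above. Once the normalisation is fixed, the statement is a straightforward chain-rule consequence of the implicit dependence of the period on $B$.
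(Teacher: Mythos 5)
Your proof is correct and follows essentially the same route as the paper: both identify $2\,\partial_B\phi$ as the Wronskian-normalized companion solution of $\mathcal{L}_1 f=0$ and obtain $\theta=-2\,\frac{dT}{dB}$ by differentiating a periodicity relation with respect to $B$. The only cosmetic difference is that you fix the normalization through the global Wronskian identity coming from the quadrature and differentiate $\phi(x+T(B);B)=\phi(x;B)$ directly, whereas the paper fixes it via the initial-value problem at $x=0$ (using uniqueness to get $\bar{y}=2\,\partial_B\phi$) and differentiates $\phi'(T)=0$ together with \eqref{theta1}.
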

	\begin{proof}
		In a general setting, consider $\{\phi',\bar{y}\}$ a fundamental set of solutions for the Hill equation $\mathcal{L}_1y=0$, where $y\in C^{\infty}([0,T])$. Thus, we have that $\bar{y}$ and $\phi'$ satisfies the equation $(\ref{zeqL})$ for $y=\bar{y}$ and $\varphi=\phi'$. In addition, the Wronskian $W$ associated to fundamental set satisfies $W(\phi'(x),\bar{y}(x))=1$ for all $x\in[0,T]$. Since $\phi'$ is odd and periodic, we obtain that $\bar{y}$ is even and it satisfies the following IVP
		
		\be \left\{
		\begin{array}{l}
			-y''+ \omega y-3\phi^2y-5\phi^4y = 0 ,\\
			y(0) =-\frac{1}{\phi''(0)}, \\
			y'(0)= 0.
		\end{array} \right.
		\label{y}
		\ee	
		The smoothness of $\phi'$ in terms of the parameter $B$ enables us to take the derivative of $\phi'(T)=0$ with respect to $B$ to obtain 
		\begin{equation}\label{eq12343}
			\phi''(T)\frac{dT}{dB}+\frac{\partial \phi'}{\partial B}(T)=0.\end{equation}
		\indent 
		Deriving equation $(\ref{quadratura-35})$ with respect to $B$ and taking $x=0$ in the final result, we obtain by $(\ref{edo})$ at the point $x=0$ that 
		$\frac{\partial \phi}{\partial B}(0)=-\frac{1}{ 2\phi''(0)}$. In addition, since $\phi'$ is odd one has that $\frac{\partial \phi'}{\partial B}$ is also odd and thus, $\frac{\partial \phi'}{\partial B}(0)=0$. The existence and uniqueness theorem for classical ODE applied to the problem $(\ref{y})$ enables us to deduce that $\bar{y}=2\frac{\partial \phi}{\partial B}$. Therefore, we can combine $(\ref{theta1})$ with $(\ref{eq12343})$ to obtain that $\frac{dT}{dB}=-\frac{\theta}{2}$. 
	\end{proof}
	
	\indent By Proposition $\ref{prop2}$, Theorem $\ref{teo-uniq}$, Lemma $\ref{specprop}$, and Lemma $\ref{teo13}$, we obtain the following result about the non-positive spectrum of $\mathcal{L}_1$ defined in $(\ref{op1})$:
	\begin{lema}\label{lema167}
		Let $L>0$ be fixed and consider $\omega>\frac{1}{8}\left(\frac{\sqrt{L^2+16\pi}}{L}+\frac{8\pi}{L^2}-1\right)$. Operator
		$\mathcal{L}_{1}$
		in $(\ref{op1})$ defined in $L_{per}^2([0,L])$ with domain in
		$H_{per}^2([0,L])$ has a unique negative eigenvalue
		which is simple and zero is a simple eigenvalue with eigenfunction
		$\phi'$.  Moreover, the remainder of the spectrum of $\mathcal{L}_{1}$
		is constituted by a discrete set
		of eigenvalues bounded away from zero.
	\end{lema}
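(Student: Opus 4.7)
The plan is to chain together the Floquet/oscillation machinery of Subsection \ref{sec4-1} with the monotonicity of the period map from Proposition \ref{prop2}. Differentiating \eqref{edo} with respect to $x$ shows immediately that $\mathcal{L}_1 \phi' = 0$, so $\phi'$ is a periodic solution of the Hill equation with potential $Q = \omega - 3\phi^2 - 5\phi^4$. Because $\phi$ from \eqref{sol2} is a positive, even, $L$-periodic dnoidal wave with a single maximum at $x=0$ and a single minimum at $x=L/2$ per fundamental period, its derivative $\phi'$ is odd and has exactly two zeros in $[0,L)$. By the Oscillation Theorem in \eqref{seq}, an eigenfunction with exactly $2n$ zeros on $[0,L)$ corresponds to $\lambda_{2n-1}$ or $\lambda_{2n}$, so either $\lambda_1 = 0$ or $\lambda_2 = 0$.

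To decide between these two cases I would apply Lemma \ref{teo13}, which identifies the constant $\theta$ appearing in \eqref{theta1} as $\theta = -2\, dT/dB$. Proposition \ref{prop2} guarantees $\frac{dT}{dB} > 0$ throughout $(B_{\omega_0}, 0)$, hence $\theta < 0$. Lemma \ref{specprop} then asserts that zero is a simple eigenvalue equal to $\lambda_1$, with eigenfunction $\phi'$.

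With $\lambda_1 = 0$ identified, the Oscillation Theorem automatically forces $\lambda_0 < 0$ to be the unique negative eigenvalue, and by \eqref{seq} the ground state $\lambda_0$ is always simple. For the final claim, since $\mathcal{L}_1$ is a self-adjoint Hill operator on $L_{per}^2([0,L])$ with smooth potential, its spectrum is purely discrete with eigenvalues accumulating only at $+\infty$, so the rest of the spectrum lies strictly above $\lambda_1 = 0$ and is bounded away from it.

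I do not anticipate serious obstacles, since the preceding lemmas have already absorbed the hard analysis; the proof should consist mainly of assembling Proposition \ref{prop2}, Lemma \ref{specprop}, and Lemma \ref{teo13} in the correct order. The only point demanding minor care is the zero-counting of $\phi'$ on a fundamental period, which rests on the observation that, by \eqref{quadratura-35}, the relevant orbit in the $(\phi,\phi')$ phase plane is a single closed curve encircling the equilibrium, so $\phi$ has exactly one maximum and one minimum per period.
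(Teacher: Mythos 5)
Your proposal is correct and follows essentially the same route as the paper: the paper likewise derives $\mathcal{L}_1\phi'=0$, counts the two zeros of $\phi'$ on $[0,L)$ to conclude $\lambda_1=0$ or $\lambda_2=0$, and then combines Lemma \ref{teo13} ($\theta=-2\,dT/dB$), Proposition \ref{prop2} ($dT/dB>0$, so $\theta<0$), and Lemma \ref{specprop} to place zero at $\lambda_1$ as a simple eigenvalue, leaving $\lambda_0<0$ as the unique (simple) negative eigenvalue. No gaps to flag.
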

	\begin{flushright}
		$\square$
	\end{flushright}
	
	\indent The results established in Lemma $\ref{lema14}$ and Lemma $\ref{lema167}$ allow to conclude the following theorem concerning the behaviour of the non-positive spectrum of $\mathcal{L}$ defined in $(\ref{L-35i})$:
	\begin{teo}\label{teoL}
		Let $L>0$ be fixed and consider $\omega>\frac{1}{8}\left(\frac{\sqrt{L^2+16\pi}}{L}+\frac{8\pi}{L^2}-1\right)$. Operator
		$\mathcal{L}$
		in $(\ref{L-35i})$ defined in $\mathbb{L}_{per}^2 $ with domain in
		$\mathbb{H}_{per}^2$ has a unique negative eigenvalue
		which is simple and zero is a double eigenvalue with corresponding eigenfunctions
		$(\phi',0)$ and $(0,\phi)$.  Moreover, the remainder of the spectrum of $\mathcal{L}$
		is constituted by a discrete set
		of eigenvalues bounded away from zero.
	\end{teo}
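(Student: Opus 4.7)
The proof is essentially a bookkeeping exercise that exploits the diagonal structure of $\mathcal{L}$ in $(\ref{L-35i})$, together with the spectral information already established in Lemma \ref{lema14} and Lemma \ref{lema167}. The plan is to observe that, since $\mathcal{L}$ acts componentwise on $\mathbb{L}_{per}^{2}=L_{per}^2\times L_{per}^2$ via $\mathcal{L}(u,v)=(\mathcal{L}_1 u,\mathcal{L}_2 v)$, a pair $(u,v)\in \mathbb{H}_{per}^2$ is an eigenfunction of $\mathcal{L}$ with eigenvalue $\lambda$ if and only if $\mathcal{L}_1 u=\lambda u$ and $\mathcal{L}_2 v=\lambda v$. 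In particular, decomposing $(u,v)=(u,0)+(0,v)$, it follows that the spectrum of $\mathcal{L}$ is the union $\sigma(\mathcal{L}_1)\cup\sigma(\mathcal{L}_2)$ and that every eigenfunction can be written as a linear combination of vectors of the form $(u,0)$ with $u$ an eigenfunction of $\mathcal{L}_1$ and $(0,v)$ with $v$ an eigenfunction of $\mathcal{L}_2$, for the same eigenvalue.

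With this reduction in hand, I would next gather the three spectral facts we need. From Lemma \ref{lema167}, $\mathcal{L}_1$ has exactly one negative eigenvalue $\lambda_-<0$, simple, with some eigenfunction $\chi_-$; a simple zero eigenvalue with eigenfunction $\phi'$; and the remaining spectrum is a discrete set of eigenvalues bounded away from zero. From Lemma \ref{lema14}, $\mathcal{L}_2\geq 0$, its kernel is simple and spanned by $\phi$, and the rest of its spectrum is discrete and bounded away from zero. Combining these via the decomposition above, the only negative eigenvalue of $\mathcal{L}$ is $\lambda_-$, which is simple with eigenfunction $(\chi_-,0)$; the kernel of $\mathcal{L}$ is two-dimensional, spanned by $(\phi',0)$ and $(0,\phi)$; and the strictly positive part of $\sigma(\mathcal{L})$ is the union of two discrete sets each bounded away from zero.

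To make simplicity precise on the product space, I would verify that the geometric multiplicity of $\lambda_-$ as an eigenvalue of $\mathcal{L}$ equals one: any eigenfunction $(u,v)$ associated with $\lambda_-$ forces $\mathcal{L}_2 v=\lambda_- v$ with $\lambda_-<0$, which is impossible since $\mathcal{L}_2\geq 0$, hence $v=0$; then $\mathcal{L}_1 u=\lambda_- u$ with $u$ in the simple $\lambda_-$-eigenspace of $\mathcal{L}_1$. The same type of argument shows that the zero eigenspace is exactly two-dimensional, since the $\mathcal{L}_1$- and $\mathcal{L}_2$-kernels are each one-dimensional by Lemmas \ref{lema14}--\ref{lema167}. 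The discreteness of the rest of the spectrum follows immediately from the corresponding statements for $\mathcal{L}_1$ and $\mathcal{L}_2$, as the union of two discrete sets bounded away from zero remains discrete and bounded away from zero.

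There is no real obstacle here: all the analytic work (Floquet theory, characterization of $\phi$ as the ground state of $\mathcal{L}_2$, application of Lemma \ref{specprop} via $\frac{dT}{dB}>0$ from Proposition \ref{prop2} and Lemma \ref{teo13}) has already been absorbed into Lemmas \ref{lema14} and \ref{lema167}. The only point that requires even a moment's care is the simplicity count on the product space, which is handled by the componentwise argument sketched above. The proof therefore reduces to writing down the diagonal decomposition and invoking the two preceding lemmas.
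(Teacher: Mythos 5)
Your proposal is correct and is exactly the argument the paper intends: the theorem is stated as an immediate consequence of Lemma \ref{lema14} and Lemma \ref{lema167} via the diagonal structure of $\mathcal{L}$, with the spectrum of $\mathcal{L}$ being $\sigma(\mathcal{L}_1)\cup\sigma(\mathcal{L}_2)$ and the multiplicity count done componentwise, just as you describe. The paper in fact leaves this step without a written proof, so your componentwise verification of the simplicity of the negative eigenvalue and the two-dimensionality of the kernel merely spells out the same reduction.
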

	\begin{flushright}
		$\square$
	\end{flushright}
	
	\section{Orbital Stability - Proof of Theorem \ref{teo-nsl}.}\label{sec5}	
	
	We establish a result of orbital stability in this section. It is well known that (\ref{eq2}) has two basic symmetries, namely, translation and rotation. Indeed, if $u = u(x,t)$ is a solution of (\ref{eq2}), so are $e^{-i\theta}u$ and $u(x-r,t)$ for any  $\theta, r \in \mathbb{R}$. Considering $U=P+iQ\equiv(P,Q)$, we obtain that \eqref{eq2} is invariant under the transformations
	\begin{equation}\label{T1}
		S_1(\theta)U := \left(
		\begin{array}{cc}
			\cos{\theta} & \sin{\theta} \\
			- \sin{\theta} & \cos{\theta}
		\end{array}
		\right) \left(
		\begin{array}{c}
			P \\ Q
		\end{array}
		\right)
	\end{equation} 
	and
	\begin{equation}\label{T2}
		S_2(r)U := \left(
		\begin{array}{c}
			P(\cdot - r, \cdot) \\
			Q(\cdot - r, \cdot)
		\end{array}
		\right).
	\end{equation}
	The actions $S_1$ and $S_2$ define unitary groups in $\mathbb{H}^1_{per}$ with infinitesimal generators given by
	\begin{equation*}
		S_1'(0)U := \left(
		\begin{array}{cc}
			0 & 1 \\
			-1 & 0
		\end{array}
		\right) \left(
		\begin{array}{c}
			P \\
			Q
		\end{array}
		\right) = J \left(
		\begin{array}{c}
			P \\
			Q
		\end{array}
		\right)
	\end{equation*}
	and
	\begin{equation*}
		S_2'(0)U := \partial_x \left(
		\begin{array}{c}
			P \\
			Q
		\end{array}
		\right).
	\end{equation*}

	A standing wave solution  as in (\ref{PW2}) is then of the form
	\begin{equation*}
		U(x,t) = \left(
		\begin{array}{c}
			\phi(x) \cos(\omega t) \\
			\phi(x) \sin(\omega t)
		\end{array}
		\right).
	\end{equation*}  
	Taking into account that the NLS equation is invariant by the actions of  $S_1$ and $S_2$, we define the orbit generated by 
	$
	\Phi = (\varphi,0)
	$
	as
	\begin{equation*}
		\mathcal{O}_\Phi = \big\{S_1(\theta)S_2(r)\Phi; \theta, r \in \R \big\} = \left\{ \left(
		\begin{array}{cc}
			\cos{\theta} & \sin{\theta} \\
			-\sin{\theta} & \cos{\theta}
		\end{array}
		\right) \left(
		\begin{array}{c}
			\phi(\cdot - r) \\
			0
		\end{array}
		\right) \; ; \; \theta, r \in \R \right\}.
	\end{equation*}
	
	We introduce the pseudometric $d$ in $\mathbb{H}^1_{per}$ by
	\begin{equation*}
		d(f,g):= \inf \{ \|f - S_1(\theta)S_2(r)g\|_{\mathbb{H}^1}\; ; \; \, \theta, r \in \R\}.
	\end{equation*}
	In particular, the distance between $f$ and $g$ is the distance between $f$ and the orbit generated by $g$ under the action of rotation and translation. In particular, 
	$
	d(f,\Phi) = d(f,\mathcal{O}_\Phi).
	$

	We present now our notion of orbital stability.

	\begin{defi}\label{defstab}
		Let $L>0$ be fixed. Consider $\Theta(x,t) = (\phi(x)\cos(\omega t), \phi(x) \sin(\omega t))$ be a standing wave for (\ref{ham}), where $\omega>\frac{1}{8}\left(\frac{\sqrt{L^2+16\pi}}{L}+\frac{8\pi}{L^2}-1\right)$. We say that $\Theta$ is orbitally stable in $\mathbb{H}^1_{per}$ provided that, given $\varepsilon > 0$, there exists $\delta > 0$ with the following property: if $U_0 \in \mathbb{H}^1_{per}$ satisfies $\|U_0 - \Phi\|_{\mathbb{H}^1} < \delta$, then the local solution $U(t)$ defined in a local interval $[0,t_0)$ can be continued to a solution in $0\leq t<\infty$ and satisfies
		\begin{equation*}
			d(U(t), \mathcal{O}_\Phi) < \varepsilon, \ \ \ \ \ \text{for all} \; t \geq 0.
		\end{equation*}
		Otherwise, we say that $\Theta$ is orbitally unstable in $\mathbb{H}^1_{per}$.
	\end{defi}

	Important to notice that the definition of stability in $\ref{defstab}$ prescribes a local well-posedness result in the energy space $\mathbb{H}_{per}^1$. Thus, we have
	\begin{prop}
		Consider $U_0\in \mathbb{H}_{per}^s$. If $s>\frac{1}{2}$, then there exist a $t_0>0$ and a unique $U\in C([0,t_0);\mathbb{H}_{per}^s)$ such that $U$ solves the initial value problem
		\be
		\left\{\begin{array}{ll}
			iU_t+U_{xx}+|U|^2U+|U|^4U=0,\\
			U(x,0)=U_0(x).
		\end{array}  \right.
		\label{CPCQ}	\ee
		In addition,  the data mapping solution
		\begin{center}
			$U_0\in \mathbb{H}_{per}^s\longmapsto U \in C([0,t_0);\mathbb{H}_{per}^s)$
		\end{center}
		is continuous.
		\label{teolwp}\end{prop}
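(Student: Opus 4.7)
The plan is to prove local well-posedness by a standard contraction argument in $C([0,t_0); \mathbb{H}_{per}^s)$ based on Duhamel's formula. The key algebraic fact that makes the argument routine is that for $s > 1/2$, the periodic Sobolev space $H_{per}^s$ is a Banach algebra, with an estimate of the form $\|fg\|_{H_{per}^s} \leq C_s \|f\|_{H_{per}^s}\|g\|_{H_{per}^s}$. This immediately gives the nonlinear bounds
\begin{equation*}
\||U|^2 U\|_{\mathbb{H}_{per}^s} \leq C_s \|U\|_{\mathbb{H}_{per}^s}^3, \qquad \||U|^4 U\|_{\mathbb{H}_{per}^s} \leq C_s \|U\|_{\mathbb{H}_{per}^s}^5,
\end{equation*}
together with the corresponding Lipschitz estimates for the nonlinearity $N(U) = |U|^2 U + |U|^4 U$.

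The first step is to rewrite \eqref{CPCQ} via Duhamel's formula as the integral equation $U(t) = S(t)U_0 + i\int_0^t S(t-\tau)N(U(\tau))\, d\tau$, where $\{S(t)\}_{t \in \mathbb{R}}$ denotes the unitary group generated by $i\partial_x^2$ on $\mathbb{H}_{per}^s$. Since $S(t)$ is an isometry of $\mathbb{H}_{per}^s$, one has $\|S(t)U_0\|_{\mathbb{H}_{per}^s} = \|U_0\|_{\mathbb{H}_{per}^s}$. Next, for $R > 2\|U_0\|_{\mathbb{H}_{per}^s}$ and $T > 0$ to be chosen, define the complete metric space $X_{T,R} = \{U \in C([0,T]; \mathbb{H}_{per}^s) : \sup_{t \in [0,T]} \|U(t)\|_{\mathbb{H}_{per}^s} \leq R\}$ and consider the map $\Gamma(U)(t) := S(t)U_0 + i\int_0^t S(t-\tau) N(U(\tau))\, d\tau$.

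Using the algebra property above one obtains $\|\Gamma(U)(t)\|_{\mathbb{H}_{per}^s} \leq \|U_0\|_{\mathbb{H}_{per}^s} + T\, C_s (R^3 + R^5)$, so that $\Gamma$ maps $X_{T,R}$ into itself provided $T\, C_s (R^3 + R^5) \leq R/2$. An analogous Lipschitz computation for the nonlinearity, together with the telescoping identity $|U|^2U - |V|^2V = |U|^2(U-V) + V(|U|^2 - |V|^2)$ and similarly for the quintic term, gives
\begin{equation*}
\|\Gamma(U)(t) - \Gamma(V)(t)\|_{\mathbb{H}_{per}^s} \leq T\, C_s (R^2 + R^4)\, \sup_{\tau \in [0,T]} \|U(\tau) - V(\tau)\|_{\mathbb{H}_{per}^s}.
\end{equation*}
Choosing $T = t_0 > 0$ sufficiently small (depending only on $\|U_0\|_{\mathbb{H}_{per}^s}$), $\Gamma$ becomes a strict contraction on $X_{t_0,R}$; Banach's fixed point theorem furnishes a unique $U \in C([0,t_0); \mathbb{H}_{per}^s)$ solving \eqref{CPCQ}.

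Continuous dependence on the initial data follows by the same contraction estimate: if $U$ and $\widetilde{U}$ are the solutions with data $U_0$ and $\widetilde{U}_0$ in a common ball, subtracting the Duhamel identities and applying the Lipschitz bound yields $\sup_{[0,t_0]}\|U - \widetilde{U}\|_{\mathbb{H}_{per}^s} \leq 2\|U_0 - \widetilde{U}_0\|_{\mathbb{H}_{per}^s}$ after possibly shrinking $t_0$. The main (very mild) obstacle is simply verifying the Banach-algebra inequality in the periodic setting, but this is classical and is proved, for instance, in \cite{Iorio}; no dispersive or Strichartz-type estimate is required because $s > 1/2$ already overcomes the lack of smoothing in one space dimension on the torus.
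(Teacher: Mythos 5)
Your contraction argument is correct: the paper itself offers no proof of Proposition \ref{teolwp}, simply citing \cite[Chapter 5]{Iorio}, and the Duhamel plus Banach-algebra fixed-point scheme you describe (using that $H_{per}^s$ is an algebra for $s>\tfrac{1}{2}$, that the nonlinearity is polynomial in the real components $(P,Q)$, and that $e^{it\partial_x^2}$ is unitary on $\mathbb{H}_{per}^s$, so no Strichartz estimates are needed) is precisely the standard argument behind that citation. The only small refinement worth noting is that Banach's fixed point theorem gives uniqueness only within the ball $X_{t_0,R}$; uniqueness among all solutions in $C([0,t_0);\mathbb{H}_{per}^s)$ follows from a Gronwall comparison of two solutions on compact subintervals, which your Lipschitz estimates already provide.
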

	\begin{proof}
		See \cite[Chapter 5]{Iorio}.
	\end{proof}
	
	\indent We have briefly mentioned in Remark $\ref{obs12}$ that the solution obtained in Proposition $\ref{teolwp}$ can not be extended to the whole semi-interval $[0,+\infty)$ because of the lack of \textit{a priori estimates} in the energy space $\mathbb{H}_{per}^s$, occasioned by the presence of the critical quintic power in $(\ref{CPCQ})$. In addition, it has been reported in the same remark that the pioneer work in \cite{Weinstein} (see also \cite{NP2015}) could be applied to obtain the orbital stability concerning the two symmetries $S_1$ and $S_2$ in $(\ref{T1})$ and $(\ref{T2})$, respectively, and by considering only standing wave solutions of the form $(\ref{PW2})$. To obtain the orbital stability using this method, the existence of global solutions in the energy space is a crucial point in the proof. Because of this, we shall use \cite{GSS} by considering only the rotation symmetry (that is, $r=0$ in the orbit $\mathcal{O}_{\Phi}$) and we prove the orbital stability in the restricted energy space $\mathbb{H}_{per,e}^1$ constituted by even periodic functions. With this restriction, Proposition $\ref{teolwp}$ can be considered in the new space $\mathbb{H}_{per,e}^1$ without further problems and Theorem $\ref{teoL}$ reads as follows:
	
	\begin{prop}\label{teoLe}
		Let $L>0$ be fixed and consider $\omega>\frac{1}{8}\left(\frac{\sqrt{L^2+16\pi}}{L}+\frac{8\pi}{L^2}-1\right)$. Operator
		$\mathcal{L}$
		in $(\ref{L-35i})$ defined in $\mathbb{L}_{per,e}^2$ with domain in
		$\mathbb{H}_{per,e}^2$ has a unique negative eigenvalue
		which is simple and zero is a simple eigenvalue with corresponding eigenfunction
		and $(0,\phi)$.  Moreover, the remainder of the spectrum of $\mathcal{L}$
		is constituted by a discrete set
		of eigenvalues bounded away from zero.
	\end{prop}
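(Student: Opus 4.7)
The plan is to derive Proposition \ref{teoLe} from Theorem \ref{teoL} by tracking the parity of the non-positive eigenfunctions of the two diagonal blocks $\mathcal{L}_1$ and $\mathcal{L}_2$ of $\mathcal{L}$. The starting observation is that the dnoidal profile $\phi$ from Theorem \ref{teo1} is an even function of $x$, since it is built from $dn$ and $sn^2$ (both even). Therefore $\phi^2$ and $\phi^4$ are even, and both $\mathcal{L}_1$ and $\mathcal{L}_2$ commute with the reflection $R:f(x)\mapsto f(-x)$. As a consequence, each eigenspace of $\mathcal{L}_i$ decomposes into its even and odd parts, and the spectrum on the even subspace is a subset of the spectrum on the full space (with possibly smaller multiplicities). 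In particular, the statement that the rest of the spectrum of $\mathcal{L}$ is discrete and bounded away from zero will follow automatically from Theorem \ref{teoL}.

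For the $\mathcal{L}_2$-block, Lemma \ref{lema14} already tells us that $0$ is a simple eigenvalue with eigenfunction $\phi$, which is even. Hence $0$ remains a simple eigenvalue of $\mathcal{L}_2$ on $L_{per,e}^2([0,L])$ with eigenfunction $\phi$, and no other non-positive eigenvalue appears in this block.

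For the $\mathcal{L}_1$-block, Lemma \ref{lema167} provides a unique, simple negative eigenvalue $\lambda_0$ and a simple zero eigenvalue with eigenfunction $\phi'$. Since $\phi$ is even, $\phi'$ is odd, so the zero mode is purged by restriction to $L_{per,e}^2([0,L])$, and $\mathcal{L}_1$ has no kernel on the even subspace. The only step requiring a bit of care is to confirm that the negative eigenvalue survives. By the Oscillation Theorem (see the ordering \eqref{seq}), the eigenfunction $\varphi_0$ associated with $\lambda_0$ has no zeros in $[0,L)$. Since $R\mathcal{L}_1 R=\mathcal{L}_1$ and $\lambda_0$ is simple, $R\varphi_0$ must be a scalar multiple of $\varphi_0$; comparing values at $x=0$, where $\varphi_0$ is nonzero, forces $R\varphi_0=\varphi_0$, that is, $\varphi_0$ is even. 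Hence $\lambda_0$ persists on $L_{per,e}^2([0,L])$ as a simple negative eigenvalue of $\mathcal{L}_1$.

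Assembling the two blocks, $\mathcal{L}$ on $\mathbb{L}_{per,e}^2$ has exactly one negative eigenvalue (simple, coming from $\mathcal{L}_1$), zero is a simple eigenvalue with eigenfunction $(0,\phi)$ (coming from $\mathcal{L}_2$), and the remainder of the spectrum is discrete and bounded away from zero. The key conceptual content is the elimination of the odd zero mode $(\phi',0)$, which is precisely the reduction that allows the stability framework of \cite{GSS} to be applied with a single (rotation) symmetry in the reduced space $\mathbb{H}_{per,e}^1$; no genuine obstacle is expected beyond this parity bookkeeping.
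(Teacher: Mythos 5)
Your proposal is correct and is exactly the argument the paper intends: Proposition \ref{teoLe} is stated there without a written proof, being the restriction of Theorem \ref{teoL} to the even sector, and your parity bookkeeping (even ground state of $\mathcal{L}_1$ survives, the odd kernel element $\phi'$ is removed, the even kernel element $\phi$ of $\mathcal{L}_2$ remains) is the implicit justification. Nothing is missing; the reflection-symmetry argument for the evenness of the ground state and the invariance of the even subspace are precisely the points one needs.
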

	\begin{flushright}
		$\square$
	\end{flushright}

	\indent  We proceed to the proof of Theorem $\ref{teo-nsl}$. Before that, we need a basic result:
	\begin{prop}\label{coro3}
		Let $L>0$ be fixed. We have that $\frac{dB}{d\omega}<0$ for all $\omega\in I_1$.
	\end{prop}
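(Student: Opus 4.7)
The plan is to differentiate the closed-form expression $(\ref{B})$, namely $B(\omega) = \alpha_3(\omega)^3/3 + \alpha_3(\omega)^2/2 - \omega \alpha_3(\omega)$, and then to reduce the sign question to an inequality involving the elliptic integrals $K$ and $K'$. Differentiating with respect to $\omega$ along the smooth curve from Theorem $\ref{teo1}$ yields
$$\frac{dB}{d\omega} = \alpha_3'(\omega)\bigl(\alpha_3^2 + \alpha_3 - \omega\bigr) - \alpha_3.$$

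First I would verify that $\alpha_3^2 + \alpha_3 - \omega > 0$ on $I_1$. This follows from the lower bound $\alpha_3(\omega) > (\sqrt{1+4\omega}-1)/2$ built into the range $I_3$ in Theorem $\ref{teo1}$: squaring the inequality $2\alpha_3 + 1 > \sqrt{1+4\omega}$ gives $4\alpha_3^2 + 4\alpha_3 > 4\omega$. Combined with $\alpha_3'(\omega) > 0$ from Corollary $\ref{coro1}$, the two contributions on the right-hand side above have opposite signs, so a careful comparison of their magnitudes is required.

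Next I would substitute the explicit formula $\alpha_3'(\omega) = -\partial_\omega\Psi/\partial_\alpha\Psi$, using the expressions for $\partial_\alpha\Psi$ and $\partial_\omega\Psi$ (together with $(\ref{dk})$ and $(\ref{dkdomega})$ for $\partial_\alpha k$ and $\partial_\omega k$) from the proofs of Theorem $\ref{teo1}$ and Corollary $\ref{coro1}$. After clearing the common prefactor $C/(\alpha_3^2 q(\alpha_3))^{5/4}$ and applying the polynomial identity
$$r(\alpha) - 3(\alpha^2+\alpha-\omega)(2\alpha+8\omega+3) = -3(4\omega+1)(2\alpha^2+3\alpha-6\omega),$$
which I verify by direct expansion, the target inequality reduces to a single relation comparing a multiple of $K'(k)(4\omega+1)(6\omega - 2\alpha_3^2 - 3\alpha_3)/(k\sqrt{q(\alpha_3)})$ with a multiple of $\alpha_3\, K(k)(8\omega + 2\alpha_3 + 3)$. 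Since $\alpha_3 < (\sqrt{48\omega+9}-3)/4$ throughout $I_1$, the factor $6\omega - 2\alpha_3^2 - 3\alpha_3$ is strictly positive, so both sides of the reduced inequality carry definite signs.

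The main obstacle will be verifying this final elliptic-algebraic inequality on the full interval $I_1$. I plan to exploit the bound $K'(k)/k > K(k)$ on $(0,1)$ (the same estimate already used in the proof of Theorem $\ref{teo1}$) together with the monotonicity in $\alpha$ of the polynomial coefficients. The most delicate regime is $\alpha_3 \to (\sqrt{48\omega+9}-3)/4$, where $K'(k)$ diverges as $k \to 1^-$ while the factor $6\omega - 2\alpha_3^2 - 3\alpha_3$ vanishes linearly in $\alpha^*-\alpha_3$; a matched asymptotic expansion there, tracking the leading-order behaviour of $1-k^2 \sim \alpha_2(\alpha_3-\alpha_1)/(\alpha_3(\alpha_2-\alpha_1))$ and of $\alpha_2$ itself, is what would close the argument at the boundary of parameter space.
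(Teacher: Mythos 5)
Your algebra is sound as far as it goes, and it is a genuinely different route from the paper's: differentiating $(\ref{B})$ gives $\frac{dB}{d\omega}=\alpha_3'(\alpha_3^2+\alpha_3-\omega)-\alpha_3$, your sign $\alpha_3^2+\alpha_3-\omega>0$ does follow from $\alpha_3>\frac{\sqrt{1+4\omega}-1}{2}$, the polynomial identity involving $r(\alpha)$ checks out, and combining it with $(\ref{dP})$, $(\ref{dk})$, $(\ref{dLw})$, $(\ref{dkdomega})$ shows that $\frac{dB}{d\omega}<0$ is equivalent to
\[
\frac{\sqrt{3}\,(4\omega+1)\,(6\omega-2\alpha^{2}-3\alpha)}{k\sqrt{q(\alpha)}}\,\frac{dK}{dk}\;>\;\frac{\alpha\,(8\omega+2\alpha+3)}{2}\,K(k),\qquad \alpha=\alpha_3(\omega).
\]
Note that the paper instead disposes of the statement in two lines, writing $(\ref{lambdaprime})$ and asserting $\omega-\Lambda^{2}-\Lambda>0$ (it uses $-1\pm\sqrt{4\omega+1}$ as the roots of $\omega-\Lambda-\Lambda^{2}$, i.e.\ twice the true roots); your elementary computation gives the opposite sign of that quantity, so your reduction shows the statement is really a quantitative competition between two terms and cannot be settled by that shortcut. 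This discrepancy with the paper's own key step is not a detail and should be flagged explicitly.

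The genuine gap is that all of the content of the proposition now sits in the displayed inequality, which you do not prove, and the closing strategy you outline cannot work. The bound $\frac{1}{k}\frac{dK}{dk}>K(k)$ is far too weak: as $k\to0^{+}$ (i.e.\ $\alpha\to\frac{\sqrt{1+4\omega}-1}{2}$, where $\alpha^{2}+\alpha\to\omega$ and $q\to3(4\omega+1)$, $\frac{1}{k}\frac{dK}{dk}\to\frac{\pi}{4}$) the two sides of the inequality converge to exactly the same value, so the sign near the equilibrium end is decided only at higher order. At the other end your own asymptotics run against you: as $k\to1^{-}$ one has $\frac{dK}{dk}\sim(1-k^{2})^{-1}$ while $6\omega-2\alpha^{2}-3\alpha$ vanishes linearly in $\alpha^{*}-\alpha$ at the same rate as $1-k^{2}$, so the left-hand side stays bounded while the right-hand side diverges like $K(k)$; the inequality therefore reverses near that boundary, and those points are admissible (they correspond to finite $L=\Psi(\alpha,\omega)$ with $\omega$ in the associated interval $I_1$). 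An interior sample shows the same: at $\omega=1$, $\alpha_3=0.8$ (so $k^{2}\approx0.40$, $L=\Psi(0.8,1)\approx3.89$, $\omega\in I_1$) the left side is about $7.7$ and the right side about $9.0$, i.e.\ $\frac{dB}{d\omega}>0$ there, as a direct finite-difference of $B$ along the fixed-period curve confirms. So no matched asymptotic expansion can close the argument: the inequality you reduced to is equivalent to the proposition itself and fails on part of the parameter region, which means your proposal is incomplete in an essential way and, as designed, cannot be completed; what it does accomplish is to expose a conflict with the sign assertion on which the paper's own proof (and its later use in showing $d''(\omega)>0$) rests.
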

	\begin{proof}
		From $(\ref{B})$ and Theorem $\ref{teo1}$, we have $B=-\Lambda(\omega)\omega+\frac{\Lambda(\omega)^3}{3}+\frac{\Lambda(\omega)^2}{2}.$ In addition, $\Lambda$ is a positive smooth function defined in $I_1$ and
		
		\begin{equation}\label{lambdaprime}
			\frac{dB}{d\omega}=-\Lambda(\omega)-(\omega-\Lambda(\omega)^2-\Lambda(\omega))\Lambda'(\omega).
		\end{equation}
		By Corollary $\ref{coro1}$, we determined $\Lambda'(\omega)>0$ for all $\omega\in I_1$ and to prove that $\frac{dB}{d\omega}<0$, it is enough to establish $\omega-\Lambda(\omega)^2-\Lambda(\omega)>0$ for all $\omega\in I_1$. Indeed, for each $\omega\in I_1$, we obtain by Theorem $\ref{teo1}$ that $\Lambda(\omega)\in I_2\subset \left(\frac{ \sqrt{1+4\omega}-1}{2},\frac{\sqrt{48\omega+9}-3}{4}\right)$. The roots of the equation $\omega-\Lambda^2-\Lambda=0$ in terms of $\omega$ are $-1-\sqrt{4\omega+1}<0$ and $-1+\sqrt{4\omega+1}>0$. Since $0<\Lambda(\omega)<\frac{\sqrt{48\omega+9}-3}{4}<-1+\sqrt{4\omega+1}$, we see that $\omega-\Lambda^2(\omega)-\Lambda(\omega)>0$ for all $\omega\in I_1$, as desired.
	\end{proof}
	
	\indent Next, let $L>0$ be fixed. By Theorem $\ref{teo1}$, we obtain  $$\omega\in I_1\mapsto \Phi=(\phi,0)\in \mathbb{H}_{per,e}^n([0,L]),$$ is smooth for all $n\in\mathbb{N}$. Using Proposition $\ref{teoLe}$, we see that the number of negative eigenvalues is one and zero is a simple eigenvalue with associated eigenfunction $(0,\phi)$. According to the properties (i)-(iii) on page 3, we can decide if the periodic wave $\Phi$ is stable or not by calculating $d''(\omega)$ in $(\ref{d2})$. In fact, deriving equation $(\ref{edo})$ with respect to $\omega$, we obtain
	
	\begin{equation}\label{edoomega}
		-\eta''+\omega\eta-3\phi^2\eta-5\phi^4\eta=-\phi,
	\end{equation}
	where $\eta=\frac{d}{d\omega}\phi$ is clearly even. Multiplying equation $(\ref{edoomega})$ by $\phi$ and integrating over $[0,L]$, we deduce by $(\ref{edo})$
	\begin{equation}\label{edoomega1}
		\frac{1}{2}\frac{d}{d\omega}\int_0^L\phi^4dx+\frac{2}{3}\frac{d}{d\omega}\int_0^L\phi^6dx=\int_0^L\phi^2dx.
	\end{equation}
	In equation $(\ref{edo})$, we multiply it by $\phi$ and integrate the result over $[0,L]$ to get
	\begin{equation}\label{edoomega12}
		\frac{1}{2}\int_0^L\phi'^2dx+\frac{\omega}{2}\int_0^L\phi^2dx-\frac{1}{2}\int_0^L\phi^4dx-\frac{1}{2}\int_0^L\phi^6dx=0.
	\end{equation}	
	\indent On the other hand, integrating the quadrature form in $(\ref{quadratura-35})$ over $[0,L]$ and using $(\ref{edoomega12})$, we have
	\begin{equation}\label{edoomega123}
		2\omega\int_0^L\phi^2dx-\frac{3}{2}\int_0^L\phi^4dx-\frac{4}{3}\int_0^L\phi^6dx+BL=0.
	\end{equation}
	Deriving equation $(\ref{edoomega123})$ in terms of $\omega$, we conclude by $(\ref{edoomega1})$ that
	\begin{equation}\label{edoomega1234}
		2\omega\frac{d}{d\omega}\int_0^L\phi^2dx=\frac{1}{2}\frac{d}{d\omega}\int_0^L\phi^4dx-\frac{dB}{d\omega}L.
	\end{equation}
	To obtain a convenient expression for $\frac{d}{d\omega}\int_0^L\phi^4dx$, we need to multiply equation $(\ref{edo})$ by $\frac{1}{\phi}$ to get, after integration by parts
	\begin{equation}\label{1edoomega12}
		-\int_0^L\frac{\phi'^2}{\phi^2}dx+\omega L-\int_0^L\phi^2dx-\int_0^L\phi^4dx=0,
	\end{equation}		
	where we are using the fact $\phi>\sqrt{\alpha_2}>0$ to make sense the first term in $(\ref{1edoomega12})$.\\
	\indent Next, we can multiply the quadrature form in $(\ref{quadratura-35})$ by $\frac{1}{\phi^2}$ and integrate the result over $[0,L]$ to have by $(\ref{1edoomega12})$, the following equality
	\begin{equation}\label{2edoomega12}
		\frac{1}{2}\int_0^L\phi^2dx+\frac{2}{3}\int_0^L\phi^4dx+B\int_0^L\frac{1}{\phi^2}dx=0.
	\end{equation}		
	In equation $(\ref{2edoomega12})$, we derive it in terms of $\omega$ to obtain by $(\ref{edoomega1234})$, the final expression
	\begin{equation}\label{3edoomega1}
		\left(2\omega+\frac{3}{8}\right)\frac{d}{d\omega}\int_0^L\phi^2dx=-\frac{3}{4}\frac{dB}{d\omega}\int_0^L\frac{1}{\phi^2}dx-\frac{dB}{d\omega}L-\frac{3B}{4}\frac{d}{d\omega}\int_0^L\frac{1}{\phi^2}dx.
	\end{equation}	
	\indent Using Proposition $\ref{coro3}$, we obtain that the first two terms in the RHS of $(\ref{3edoomega1})$ are positive.  In fact, let us determine a convenient expression for $\int_0^L\frac{1}{\phi^2}dx$ using the explicit solution in $(\ref{sol2})$. Using the standard equalities of elliptic functions given by $k^2sn^2=1-dn^2$ and $k'nd=\sqrt{1-k^2}nd=dn(\cdot+K(k))$, we obtain
	\begin{equation}\label{expdn}\begin{array}{llll}
			\displaystyle \int_0^L\frac{1}{\phi^2}dx&=&\displaystyle\int_0^L\frac{1+\beta^2sn^2\left(\frac{2K(k)x}{L},k\right)}{\alpha_3dn^2\left(\frac{2K(k)x}{L},k\right)}\\\\
			&=&\displaystyle\frac{L}{K(k)\alpha_3}\int_0^K\left[\left(1+\frac{\beta^2}{k^2}\right)\frac{1}{dn^2(x,k)}-\frac{\beta^2}{k^2}\right]dx\\\\
			&=&\displaystyle\frac{L}{K(k)\alpha_3}\int_0^K\left[\left(1+\frac{\beta^2}{k^2}\right)nd^2(x,k)
			-\frac{\beta^2}{k^2}\right]dx\\\\
			&=&\displaystyle\frac{L}{K(k)\alpha_3k'^2}\left(1+\frac{\beta^2}{k^2}\right)\int_0^Kdn^2(x,k)dx
			-\frac{\beta^2L}{k^2\alpha_3}\\\\
			&=&\displaystyle\frac{L}{K(k)}\left(\frac{\alpha_1-\alpha_3}{k'^2\alpha_1\alpha_3}\right)E(k)+\frac{L}{\alpha_1},
	\end{array}\end{equation}
	where we are using the fact that $\beta^2=-\frac{\alpha_3}{\alpha_1}k^2$. Since the expression for $k^2$ in $(\ref{kg})$ establishes the basic equality  $\frac{\alpha_1-\alpha_3}{k'^2\alpha_1\alpha_3}=\frac{\alpha_1-\alpha_2}{\alpha_1\alpha_2}>0$, we obtain by $(\ref{expdn})$
	\begin{equation}\label{expdn1}
		\int_0^L\frac{1}{\phi^2}dx=\frac{L}{\alpha_1}-\frac{E(k)}{K(k)}\frac{L}{\alpha_1}+\frac{E(k)}{K(k)}\frac{L}{\alpha_2}.
	\end{equation}
	\indent We need to analyse the monotonicity of both terms  $\alpha_1^{-1}\left[1-\frac{E(k)}{K(k)}\right]$ and $\alpha_2^{-1}\frac{E(k)}{K(k)}$ with respect to $\omega$. First, we see that \be\label{expdn2}\begin{array}{lllll}\displaystyle\frac{d}{d\omega}\left(\alpha_1^{-1}\left[1-\frac{E(k)}{K(k)}\right]\right)&=&\displaystyle-\frac{\alpha_1'}{\alpha_1^2}+
		\frac{E(k)}{K(k)}\frac{\alpha_1'}{\alpha_1^2}\\\\
		&+&\displaystyle\alpha_1^{-1}\frac{d}{dk}\left[\frac{K(k)-E(k)}{K(k)}\right]\frac{dk}{d\omega},\end{array}\ee
	where $\alpha_1'$ represents the derivative of $\alpha_1$ with respect to $\omega$. By Remark $\ref{rem1}$, we see that $\alpha_1'>0$ and the first term in the RHS of $(\ref{expdn2})$ is negative while the second one is positive. To handle with the third term in the RHS, we notice that $\frac{d}{dk}\left[\frac{K(k)-E(k)}{K(k)}\right]>0$ for all $k\in(0,1)$ and $\frac{dk}{d\omega}<0$ by Corollary $\ref{coro1}$. The fact that $\alpha_1<0$ enables us to deduce $\alpha_1^{-1}\frac{d}{dk}\left[\frac{K(k)-E(k)}{K(k)}\right]\frac{dk}{d\omega}>0$. Next, we analyse the monotonicity of $\alpha_2^{-1}\frac{E(k)}{K(k)}$. In fact, since $k\in (0,1)\mapsto \frac{E(k)}{K(k)}$ is strictly decreasing, $\frac{d\alpha_2^{-1}}{d\omega}>0$, by Remark $\ref{rem1}$ and $\frac{dk}{d\omega}<0$, by Corollary $\ref{coro1}$, we obtain $\frac{d}{d\omega}\left[\alpha_2^{-1}\frac{E(k)}{K(k)}\right]>0$.\\
	\indent We see that the first term in the RHS of $(\ref{expdn2})$ is negative. Since $B$ is also negative, the product of them will be positive and by $(\ref{3edoomega1})$, the quantity $\frac{d}{d\omega}\int_0^{L}\phi^2dx$ can be eventually negative. Moreover, the first term $\frac{L}{\alpha_1}$ in the RHS of $(\ref{expdn1})$ is the only negative term and the remainder of the quantities are positive. Thus, by $(\ref{3edoomega1})$ we only need to handle with terms containing  factors of $\alpha_1$ which can become negative the value of $\frac{d}{d\omega}\int_0^L\phi^2dx$. First, we handle with $-\frac{\alpha_1'}{\alpha_1^2}$ in $(\ref{expdn2})$ and $B$ in $(\ref{Balpha1})$ (we omit the prefactor $L$ to simplify the notation). In fact, we have
	\be\label{R1}
	R_1=-\frac{3B}{4}\left(-\frac{\alpha_1'}{\alpha_1^2}\right)=-\frac{3}{4}\frac{\alpha_1'\omega}{\alpha_1}+\frac{\alpha_1\alpha_1'}{4}+\frac{3}{8}\alpha_1'.\ee
	Next, we deal with the second term in the RHS of $(\ref{3edoomega1})$ to get, by $(\ref{Balpha1})$,
	\be\label{R2}
	R_2=-\frac{dB}{d\omega}=-(-\alpha_1'\omega-\alpha_1+\alpha_1^2\alpha_1'+\alpha_1\alpha_1').
	\ee
	\indent Finally, we handle with $\frac{L}{\alpha_1}$ in the first term in the RHS of $(\ref{expdn1})$ and $\frac{dB}{d\omega}$. Omitting the prefactor $L$ by convenience, we obtain
	\be\label{R3}
	R_3=-\frac{3}{4}\frac{dB}{d\omega}\frac{1}{\alpha_1}=\frac{3}{4}\frac{\alpha_1'\omega}{\alpha_1}+\frac{3}{4}-\frac{3}{4}\alpha_1\alpha_1'-\frac{3}{4}\alpha_1'.
	\ee
	Thus, by $(\ref{R1})$, $(\ref{R2})$ and $(\ref{R3})$ we have
	\be\label{sumR}
	R=\sum_{i=1}^3R_i=-\frac{dB}{d\omega}+\frac{3}{4}-\alpha_1'\left(\frac{1}{2}\alpha_1+\frac{3}{8}\right).
	\ee
	For a fixed $L>0$, the value of  $\omega$ is positive. Since $\frac{\sqrt{48\omega+9}+3}{4}>\frac{3}{2}$, it follows that  $\alpha_1<-\frac{\sqrt{48\omega+9}+3}{4}<-\frac{3}{2}<-\frac{3}{4}$ and $\frac{1}{2}\alpha_1+\frac{3}{8}<0$. Thus $R$ in $(\ref{sumR})$ is positive and since the remainder of terms in $(\ref{expdn2})$ are positive, we obtain by $(\ref{3edoomega1})$ that $\frac{d}{d\omega}\int_0^L \phi^2dx>0$. Thus, $d''(\omega)>0$ and this fact finishes the proof of Theorem $\ref{teo-nsl}$.

	

	
	
	
	\section*{Author's contributions} All authors contributed equally to this work.

	\section*{Acknowledgments} F. Natali is partially supported by CNPq (grant 304240/2018-4), Funda\c{c}\~ao Arauc\'aria (grant 002/2017) and CAPES MathAmSud (grant 88881.520205/2020-01).

	\section*{Data Availability} Data sharing is not applicable to this article as no new data were created or analyzed in this study.

\end{document}